\documentclass[12pt,a4paper]{amsart}
\usepackage{latexsym}
\usepackage{amsmath,amssymb,exercise}
\usepackage{dsfont}
\usepackage{wasysym}

\usepackage[bookmarks=true,hyperindex,pdftex,colorlinks,citecolor=blue]{hyperref}

 \newtheorem{theo}{Theorem}[section]
 
 \newtheorem{lem}[theo]{Lemma}
\newtheorem{propos}[theo]{Proposition}
\newtheorem{prob}[theo]{Problem}
 \newtheorem{corollary}[theo]{Corollary}
 \theoremstyle{definition}
 \newtheorem{definition}[theo]{Definition}

\newcommand{\dopu}{{:}\allowbreak\ }

\newcommand{\R}{{\mathbb{R}}}

\newcommand{\N}{{\mathbb{N}}}

\renewcommand{\leq}{\leqslant}
\renewcommand{\le}{\leqslant}
\renewcommand{\geq}{\geqslant}
\renewcommand{\ge}{\geqslant}

\newcommand{\loglike}[1]{\mathop{\rm #1}\nolimits}
\newcommand{\ex}{\loglike{ext}}

\newcommand{\conv}{{\loglike{conv}}}

\newcommand{\spn}{\loglike{span}}

\newcommand{\bea}{\begin{eqnarray*}}
\newcommand{\eea}{\end{eqnarray*}}
\newcommand{\beq}{\begin{eqnarray}}
\newcommand{\eeq}{\end{eqnarray}}

%%%%%%%%%%%%%%%%%%%%%%%%%%%%%%%%%%%%%%
%%%%% Carlos Angosto
%%%%%%%%%%%%%%%%%%%%%%%%%%%%%%%%%%%%%

\def\epsilon{\varepsilon}

\def\cB{{{\mathcal B}}}
\def\cU{{{\mathcal U}}}

\numberwithin{equation}{section}

\hyphenation{whether }
\begin{document}
\dedicatory{Dedicated to the memory of Bernardo Cascales}
\title[Non-expansive bijections, uniformities and faces]{Non-expansive bijections, uniformities and polyhedral faces}

\author[Angosto]{Carlos Angosto}
\address[Angosto]{Departamento de Matem\'atica Aplicada y
 Estad\'{\i}stica Universidad Polit\'ecnica de Cartagena
\newline \href{https://orcid.org/0000-0001-7592-6121}{ORCID: \texttt{0000-0001-7592-6121}}}
\email{carlos.angosto@upct.es}

\author[Kadets]{Vladimir Kadets}

\address[Kadets]{ School of Mathematics and Informatics, V.N. Karazin Kharkiv National University, 61022 Kharkiv, Ukraine
\newline \href{http://orcid.org/0000-0002-5606-2679}{ORCID: \texttt{0000-0002-5606-2679}}}
\email{v.kateds@karazin.ua }

\author[Zavarzina]{Olesia Zavarzina}
\address[Zavarzina]{ School of Mathematics and Informatics, V.N. Karazin Kharkiv National University, 61022 Kharkiv, Ukraine
\newline \href{http://orcid.org/0000-0002-5731-6343}{ORCID: \texttt{0000-0002-5731-6343}}}
\email{olesia.zavarzina@yahoo.com}

\subjclass[2010]{46B20, 54E15}

\keywords{non-expansive map; unit ball; expand-contract plastic space}
\thanks{The research of the first author is supported by MINECO grant MTM2014-57838-C2-1-P and Fundaci\'on S\'eneca, Regi\'on de Murcia grant 19368/PI/14. The research of the second author is done in frames of Ukrainian Ministry of Science and Education Research Program 0118U002036, and it was partially supported by Spanish MINECO/FEDER projects MTM2015-65020-P and MTM2017-83262-C2-2-P and Fundaci\'on S\'eneca, Regi\'on de Murcia grant 19368/PI/14.
The third author is partially supported by a grant from Akhiezer's Fund, 2018. }

\begin{abstract} We extend the result of B. Cascales at al. about expand-contract plasticity of the unit ball of strictly convex Banach space to those spaces whose unit ball is the union of all its finite-dimensional polyhedral extreme subsets. We also extend the definition of expand-contract plasticity to uniform spaces and generalize the theorem on expand-contract plasticity of totally bounded metric spaces to this new setting.
\end{abstract}

\date{July 26, 2018}
\maketitle

%%%%%%%%%%%%%%%%%%%%%%
%%%% Introduction %%%%%%%
%%%%%%%%%%%%%%%%%%%%%%

\section{Introduction}

Let $E_1, E_2$ be metric spaces. A map $F \colon E_1 \to E_2$ is called non-expansive (resp. non-contractive) if $d(F(x),F(y)) \leq d(x, y)$ (resp. $d(F(x),F(y)) \geq d(x, y)$)
for all $x,y \in E_1$. A metric space $E$ is called \emph{expand-contract plastic} (or simply, an EC-space) if every non-expansive bijection from $E$ onto itself is an isometry.

\cite[Satz IV]{FH1936} or \cite[Theorem 1.1]{NaiPioWing} imply that every totally bounded metric space is an EC-space, but there are examples of EC-spaces that are not totally bounded (and even unbounded).

In general bounded closed subsets of infinite-dimensional Banach spaces are not EC-spaces, see \cite[Example 2.7]{CKOW2016}. It is not known whether it is true that for every Banach space $X$ its unit ball $B_X$ is an EC-space. There is no known counterexample and there are some known partial positive results: finite-dimesional spaces (the unit ball is compact), strictly convex Banach spaces (see \cite[Theorem 2.6]{CKOW2016}) or $\ell_1$-sum of strictly convex Banach spaces (see \cite[Theorem 3.1]{KZ2017}). A more general problem is studied in \cite{Zav}:

\begin{prob}\label{prob:ecbanach}
Let $X$ and $Y$ be Banach spaces and $F:B_X\to B_Y$ be a bijective non-expansive map. Is $F$ an isometry?
\end{prob}

There are positive answers when $Y$ is $\ell_1$, a finite-dimensional Banach space or a strictly convex Banach space (see \cite{Zav}).

\vspace{2mm}
The unit sphere of a strictly convex space consists of its extreme points. The main result of our paper is Theorem~\ref{extsubset_union}, in which we substitute extreme points by finite-dimensional polyhedral extreme subsets. Namely, we demonstrate that if $X$, $Y$ are Banach spaces, $F: B_X \to B_Y$ is a bijective non-expansive map and $S_Y$ is the union of all its finite-dimensional polyhedral extreme subsets, then $F$ is an isometry.

\vspace{2mm} Let us briefly explain the structure of the paper.
In Section~\ref{sec:uniformities} we extend the results about EC-spaces in totally bounded metric spaces to totally bounded uniform spaces, see Theorem~\ref{theo:compactuniform} and Lemma~\ref{lem:totallybounded}.

In Section~\ref{sec:previousbanach} we recollect some known results about bijective non-expansive maps between unit balls that we will need in the sequel.

The goal of Section~\ref{sec:Banach} is to demonstrate the main result. On the way we collect as much as possible information about preimages under a bijective non-expansive map of finite-dimensional faces of the unit ball. Using this information we obtain positive answers for the Problem~\ref{prob:ecbanach} for the case when $X$ is strictly convex (Theorem~\ref{plast-str-conv-dom}) and for the case when $S_Y$ is the union of all its finite-dimensional polyhedral extreme subsets (Theorem~\ref{extsubset_union}).

\vspace*{8mm}

We dedicate this paper to the memory of \emph{Bernardo Cascales}, who passed away in April, 2018. From the very beginning our activity related to EC-spaces was motivated by Bernardo's interest to the subject. It was his idea to search for a definition of EC-spaces that could be applicable to topological vector spaces and to uniform spaces. It was Bernardo who communicated to us the Ellis' result \cite{ellis58} and the way how this result was used by Isaac Namioka \cite{Namioka} in his elegant demonstration of EC-plasticity of compact metric spaces. We were planning to start a joint project, but\dots
\vspace{3 mm}

%%%%%%%%%%%%%%%%%%%%%%%%%%%%%%%%%%%%%%%%%%%%%%%%%%%%%%%%%%%%%%%%%%%%%%%%%%%%%%%%%%%%%%%%%%
%%%%%%%%%%%%%%%% Section Uniformities %%%%%%%%%%%%%%%%%%%%%
%%%%%%%%%%%%%%%%%%%%%%%%%%%%%%%%%%%%%%%%%%%%%%%%%%%%%%%%%%%%%%%%%%%%%%%%%%%%%%%%%%%%%%%%%%

\section{Non-expansive maps and uniformities}\label{sec:uniformities}

The aim of this section is to extend the results about EC-spaces in metric spaces to uniform spaces. We denote by $(E,\cU)$ a uniform space and by $\cB$ a basis of the uniformity. For $A\subset E$, $U,V \subset E\times E$, and $u,v,w \in E$ we denote:
$$
A^{-1} = \{(u,v) \colon (v, u) \in A	\}, \,\, \,\, (u,v)\circ (v,w)=(u,w);
 $$
 $$
 U\circ V=\{(u,v) \colon \text{ there is }w\in E\text{ such that }(u,w)\in U\text{ and } (w,v)\in V	\};
 $$
 $$
 U[A]=\{u\in E\text{ such that there is }v\in A\text{ with }(u,v)\in U\}.
 $$
The uniform space $(E,\cU)$ is called totally bounded if for every $U\in \cB$ there is a finite subset $\widetilde E \subset E$ such that $E = U[\widetilde E]$.

Let us recall the following definitions that were introduced in~\cite{ciric71} and extend the concepts of non-expansive, non-contractive and isometric maps to uniform spaces.

\begin{definition}
Let $(E,\cU)$ be a uniform space, $\cB$ a basis of the uniformity and $F:E\to E$ a map. We say that $F$ is \emph{non-contractive} for the basis $\cB$ if for every $V \in \cB$
 \begin{equation}\label{eq:finv2}
 (F(x),F(y))\in V \Rightarrow (x,y)\in V
\end{equation}
We say that $F$ is \emph{non-expansive} for the basis $\cB$ if for every $V \in \cB$
 $$
 (x,y)\in V \Rightarrow (F(x),F(y))\in V.
 $$
We say that $F$ is an \emph{isobasism} for the basis $\cB$ if for every $V \in \cB$
 $$
 (F(x),F(y))\in V \Leftrightarrow (x,y)\in V.
 $$
\end{definition}

For unexplained standard definitions and terminology we refer to \cite[Chapter 6]{Kelley}.

The next proposition will be used in the proof of Theorem~\ref{theo:compactuniform}.

\begin{propos}[Ellis \cite{ellis58}]\label{pro:Ellis}
Let $K$ be a compact space, let $S\subset C(K,K)$ be a semigroup for the composition, and let $\Sigma:=\overline{S}\subset K^K$. The following are equivalent:
\begin{enumerate}
 \item each member of $\Sigma$ is onto,
 \item each member of $\Sigma$ is one to one,
 \item $\Sigma$ is a group and $id:K\to K$ is the identity element of the group.
\end{enumerate}
\end{propos}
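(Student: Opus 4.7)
The plan is to recognize $\Sigma$ as a compact right-topological semigroup and apply the Ellis--Numakura lemma. First, $\Sigma$ is compact because it is closed in $K^K$, which is compact by Tychonoff. To see that $\Sigma$ is closed under composition, observe that for every $g \in K^K$ the right-multiplication $\rho_g \colon f \mapsto f \circ g$ is continuous in the product topology, since $\pi_x \circ \rho_g = \pi_{g(x)}$, whereas for every $f \in C(K,K)$ the left-multiplication $\lambda_f \colon g \mapsto f \circ g$ is continuous because $\pi_x \circ \lambda_f = f \circ \pi_x$. Combining these yields $\lambda_f(\Sigma) \subset \Sigma$ for $f \in S$, and then $\rho_g(\Sigma) \subset \Sigma$ for $g \in \Sigma$, so $\Sigma \circ \Sigma \subset \Sigma$.

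The implications $(3) \Rightarrow (1)$ and $(3) \Rightarrow (2)$ are immediate. For $(1) \Rightarrow (3)$ and $(2) \Rightarrow (3)$ the main tool is the Ellis--Numakura lemma: every compact Hausdorff right-topological semigroup contains an idempotent. Applied to $\Sigma$ itself it yields $e \in \Sigma$ with $e \circ e = e$, i.e., $e(e(x)) = e(x)$ for all $x \in K$. Combined with surjectivity of $e$ (under (1)) or injectivity of $e$ (under (2)), this forces $e(x) = x$ for every $x$, whence $e = \mathrm{id}$ and $\mathrm{id} \in \Sigma$.

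To upgrade this to a full group structure, fix $f \in \Sigma$ and consider $\Sigma f = \rho_f(\Sigma)$, which is compact and a left ideal in $\Sigma$, and in fact a subsemigroup since $(\Sigma f)(\Sigma f) \subset \Sigma \Sigma \Sigma f \subset \Sigma f$. A second application of Ellis--Numakura supplies an idempotent $e' \in \Sigma f$, and the same surjectivity/injectivity dichotomy forces $e' = \mathrm{id}$, so $\mathrm{id} \in \Sigma f$ and there exists $g \in \Sigma$ with $g \circ f = \mathrm{id}$. A standard monoid argument then closes the loop: if $h \in \Sigma$ is a left inverse of $g$, then $h = h \circ (g \circ f) = (h \circ g) \circ f = f$, whence $f \circ g = h \circ g = \mathrm{id}$, so $g$ is a two-sided inverse of $f$. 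Therefore $\Sigma$ is a group with identity $\mathrm{id}$.

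The principal subtlety is that composition on $K^K$ is not jointly continuous, so the whole argument hinges on the right-topological structure (which is exactly the setting of the Ellis--Numakura lemma) and on the hypothesis $S \subset C(K,K)$, the latter being used only to ensure that the closure $\Sigma$ is genuinely a subsemigroup of $K^K$.
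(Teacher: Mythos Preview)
The paper does not prove this proposition at all: it is quoted as a known result of Ellis and simply cited to \cite{ellis58}. So there is no ``paper's own proof'' to compare against; you have supplied a proof where the authors were content to invoke the literature.

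Your argument is the standard one and is correct. You correctly identify $\Sigma$ as a compact right-topological semigroup (the two-step closure argument using continuity of $\lambda_f$ for $f\in C(K,K)$ and of $\rho_g$ for arbitrary $g$ is exactly what is needed), and then the Ellis--Numakura lemma does the work: an idempotent that is onto, or one that is one-to-one, must be the identity, and applying this once in $\Sigma$ and once in each left ideal $\Sigma f$ produces left inverses, which you then upgrade to two-sided inverses by the usual monoid trick. One small caveat: the Ellis--Numakura lemma and the step ``compact image $\Sigma f$ is closed'' both use that $K^K$ is Hausdorff, which requires $K$ Hausdorff. The proposition as stated says only ``compact space'', but in the paper it is applied only in the Hausdorff setting (Theorem~\ref{theo:compactuniform}), and Ellis's original result is for compact Hausdorff spaces, so this is harmless---just worth noting that you are tacitly using it.
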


\begin{theo}\label{theo:compactuniform}
Let $K$ be a compact Hausdorff uniform space, $\cB$ a basis for the uniformity made of open sets in $K\times K$. If $F:K\to K$ is a non-contractive bijection for the basis $\cB$, then $F$ is an isobasism for the basis $\cB$.
\end{theo}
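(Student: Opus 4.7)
The overall strategy is to apply Ellis' theorem (Proposition~\ref{pro:Ellis}) to the semigroup generated by $F^{-1}$, pull the identity map out of the closure of that semigroup, and use this to upgrade the non-contractivity of $F$ to a full isobasism.

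I would first note that $F$ being non-contractive for $\cB$ is equivalent to $F^{-1}$ being non-expansive for $\cB$, which makes $F^{-1}$ (and therefore every iterate $F^{-n}$, $n\ge 1$) uniformly continuous on $K$. The set $S:=\{F^{-n}:n\ge 1\}\subset C(K,K)$ is then a uniformly equicontinuous semigroup under composition, so its closure $\Sigma:=\overline{S}$ inside $K^K$ (product topology) is a compact semigroup of continuous maps. Each $F^{-n}$ is onto because $F$ is a bijection; for an arbitrary $g=\lim_\alpha F^{-n_\alpha}\in\Sigma$ and $y\in K$, I would pick $x_\alpha$ with $F^{-n_\alpha}(x_\alpha)=y$, extract a convergent subnet $x_{\alpha_\beta}\to x_0$ by compactness, and use equicontinuity of $\{F^{-n}\}$ together with pointwise convergence at $x_0$ to conclude $F^{-n_{\alpha_\beta}}(x_{\alpha_\beta})\to g(x_0)$; the left-hand side is constantly $y$, so $g(x_0)=y$. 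Proposition~\ref{pro:Ellis} then guarantees that $\Sigma$ is a group whose identity element is $id_K$, which in turn produces a net $(n_\alpha)$ of positive integers with $F^{-n_\alpha}\to id_K$ pointwise on $K$.

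The crux is to promote this convergence to $F^{n_\alpha}\to id_K$ pointwise. For fixed $x\in K$, set $u_\alpha:=F^{n_\alpha}(x)$. By compactness of $K$ it suffices to see that every accumulation point of the net $(u_\alpha)$ equals $x$. Given a convergent subnet $u_{\alpha_\beta}\to u^*$, the identity $F^{-n_{\alpha_\beta}}(u_{\alpha_\beta})=x$ together with equicontinuity of the backward iterates and the pointwise convergence $F^{-n_{\alpha_\beta}}(u^*)\to u^*$ forces $F^{-n_{\alpha_\beta}}(u_{\alpha_\beta})\to u^*$, and therefore $u^*=x$.

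To conclude, fix $V\in\cB$ and a pair $(x,y)\in V$. Because $V$ is open in $K\times K$, the convergence $(F^{n_\alpha}(x),F^{n_\alpha}(y))\to(x,y)$ yields an index $\alpha_0$ with $(F^{n_{\alpha_0}}(x),F^{n_{\alpha_0}}(y))\in V$, and $n_{\alpha_0}\ge 1$ automatically. Applying the non-contractivity hypothesis exactly $n_{\alpha_0}-1$ times descends the forward orbit back inside $V$ and delivers $(F(x),F(y))\in V$; combined with the given implication this is exactly the isobasism property. The main obstacle is the third step, for $F$ itself is a priori only a bijection with no known continuity, so one cannot push $F^{-n_\alpha}\to id_K$ through $F^{n_\alpha}$ by direct composition and must instead exploit compactness and the equicontinuity of the backward iterates; the openness of the entourages in $\cB$ is equally indispensable, since only then does eventual membership of a convergent net translate into actual membership of the single orbit point $(F^{n_{\alpha_0}}(x),F^{n_{\alpha_0}}(y))$.
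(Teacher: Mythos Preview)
Your argument is correct, but it takes a longer detour than the paper's.  The paper works with the \emph{forward} semigroup $S=\{F^{n}:n\in\N\}$ rather than the backward one.  This is possible because the paper first observes the one-line fact you explicitly denied yourself: since $F^{-1}$ is non-expansive it is continuous, and a continuous bijection of a compact Hausdorff space is a homeomorphism, so $F$ itself is continuous.  Once $S\subset C(K,K)$, the paper shows (using openness of the $V\in\cB$) that every $G\in\Sigma=\overline{S}$ inherits the non-contractivity condition, hence is injective; Ellis' theorem then puts $F^{-1}$ directly into $\Sigma$, so $F^{-1}$ is itself non-contractive and the proof ends immediately.

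Your route---backward semigroup, surjectivity of every limit, Ellis to get $id_K\in\Sigma$, then promoting $F^{-n_\alpha}\to id$ to $F^{n_\alpha}\to id$ via equicontinuity---is valid, but the ``promotion'' step is exactly the price paid for not noticing that $F$ is automatically continuous.  Both proofs rely on Ellis and on the openness of the basis elements at the same decisive moment; the paper's version just reaches the conclusion with fewer moving parts.
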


\begin{proof}
The demonstration follows the idea of Namioka's unpublished proof of EC-plasticity of compact metric spaces \cite{Namioka}, and is presented here with his kind permission.

Observe that since $F$ is non-contractive, then $F^{-1}$ is non-expansive and then
 $$
 (x,y)\in V \Rightarrow (F^{-1}(x),F^{-1}(y))\in V,
 $$
so $F^{-1}$ is a continuous bijection between compact spaces and then $F$ is a continuous function. Consider the semigroup
 $$
 S=\{F^n:n\in\N\}\subset C(K,K)
 $$
and let $G\in\Sigma=\overline{S}$ be the pointwise closure of $S$. Choose a net $(G_i)_{i\in I}$ in $S$ that converges to $G$. Let $x,y\in K$ and $V\in\cB$ be such that $(G(x),G(y))\in V$. There is $j\in I$ such that $(G_j(x),G_j(y))\in V$. Let $n\in \N$ be such that $G_j=F^n$. Then since $F$ is non-contractive we have that
 $$
 (F^n(x),F^n(y))\in V\Rightarrow (F^{n-1}(x),F^{n-1}(y))\in V\Rightarrow\cdots\Rightarrow (x,y)\in V.
 $$
We have proved that
 \begin{equation}\label{eq:ginV}
 (G(x),G(y))\in V \Rightarrow (x,y)\in V
 \end{equation}
for every $G\in\Sigma$, $x,y\in K$ and $V\in\cB$. Then we have that $G$ is one to one. By Proposition~\ref{pro:Ellis}, $\Sigma$ is a group so $F^{-1}\in\Sigma$ and then by~\eqref{eq:ginV} we have that
 $$
 (F^{-1}(x),F^{-1}(y))\in V \Rightarrow (x,y)\in V
 $$
for every $x,y\in K$ and $V\in\cB$, so $F^{-1}$ is non-contractive and then $F$ is an isobasism for the basis $\cB$.
\end{proof}

We know that every totally bounded metric space is an EC-space. The above theorem generalizes this result for uniformities when the space is compact. We can use some ideas of \cite{NaiPioWing} to get the following results for uniformities in totally bounded spaces.

\begin{lem}\label{lem:totallybounded}
Let $(E,\cU)$ be a totally bounded uniform space, $\cB$ a basis for the uniformity in $E\times E$ and $F:E\to E$ a non-contractive bijection for $\cB$.
Then $F$ satisfies that for every $V\in\cB$
 \begin{equation}\begin{split}\label{eq:xinV2}
 (x,y)\in V\Rightarrow &\text{ for each }W\in \cU\text{ there is }k\in\N\text{ such that }\\&(F^k(x),F^k(y))\in W\circ V \circ W.
 \end{split}\end{equation}
\end{lem}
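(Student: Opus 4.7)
My approach is to adapt the metric-space pigeonhole argument of \cite{NaiPioWing} to the uniform setting. The rough idea is that, because $(E,\cU)$ is totally bounded, the sequence of pairs $(F^n(x),F^n(y))_{n\in\N}$ in $E\times E$ must revisit any prescribed ``rectangle'' infinitely often, so two of these pairs are close; non-contractivity then pulls this closeness back all the way to $(x,F^k(x))$ and $(y,F^k(y))$ for some $k$, which is exactly what the conclusion needs.

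First I would reduce to the case when $W$ is symmetric by replacing it with a symmetric $W_0\in\cU$ with $W_0\subset W$: the stronger statement for $W_0$ implies the required one since $W_0\circ V\circ W_0\subset W\circ V\circ W$. Next I would choose $V_2\in\cB$ with $V_2\subset W_0$, and then a symmetric auxiliary entourage $U\in\cU$ with $U\circ U\subset V_2$ (standard entourage ``halving'', see \cite[Chapter 6]{Kelley}), followed by $V_1\in\cB$ with $V_1\subset U$. These refinements yield $V_1\circ V_1^{-1}\subset U\circ U\subset V_2$. The two-step refinement is needed because elements of $\cB$ are not assumed symmetric: the pigeonhole output below naturally lands in $V_1\circ V_1^{-1}$ rather than in $V_1$ itself, and one must absorb this into a genuine basis element in order to apply non-contractivity.

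Total boundedness applied to $V_1$ gives a finite set $\{z_1,\dots,z_N\}$ with $E=V_1[\{z_1,\dots,z_N\}]$, so the pairs $(F^n(x),F^n(y))$ are distributed among the $N^2$ sets $V_1[z_i]\times V_1[z_j]$. Pigeonhole produces indices $m<n$ and a fixed $(i,j)$ with $F^m(x),F^n(x)\in V_1[z_i]$ and $F^m(y),F^n(y)\in V_1[z_j]$, whence $(F^m(x),F^n(x))\in V_1\circ V_1^{-1}\subset V_2$ and analogously $(F^m(y),F^n(y))\in V_2$. Since $V_2\in\cB$, iterating non-contractivity $m$ times yields $(x,F^{n-m}(x))\in V_2$ and $(y,F^{n-m}(y))\in V_2$. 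Setting $k=n-m$ and using that $W_0$ is symmetric (so $V_2^{-1}\subset W_0^{-1}=W_0$), the chain $(F^k(x),x)\in V_2^{-1}\subset W_0$, $(x,y)\in V$, $(y,F^k(y))\in V_2\subset W_0$ composes to $(F^k(x),F^k(y))\in W_0\circ V\circ W_0\subset W\circ V\circ W$, as required. The delicate point, handled by the refinement above, is to route the pigeonhole output through a basis element so that iterated non-contractivity can be applied.
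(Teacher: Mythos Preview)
Your argument is correct and follows essentially the same pigeonhole-plus-iterated-non-contractivity route as the paper's proof; the only cosmetic differences are that you first pass to a symmetric $W_0\subset W$ and use a symmetric halving $U\in\cU$, whereas the paper stays inside $\cB$ throughout by taking $W'\in\cB$ with $W'\subset W$, then $Z\in\cB$ with $Z\circ Z\subset W'$ and $U\in\cB$ with $U\subset Z\cap Z^{-1}$, achieving the same containment $U\circ U^{-1}\subset W'$. Both bookkeepings lead to the identical final composition $(F^k(x),x)\circ(x,y)\circ(y,F^k(y))\in W\circ V\circ W$.
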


\begin{proof}
Choose $x,y\in E$ and $V\in\cB$ with $(x,y)\in V$. Choose $W\in\cU$, $W'\in\cB$ a subset of $W$, $Z\in\cB$ such that $Z\circ Z\subset W'$ and $U\in \cB$ such that $U\subset Z\cap Z^{-1}$. Since $E$ is totally bounded there is a finite set $\widetilde E \subset E$ such that $E = U[\widetilde E]$. Then there is a infinite set $M\subset\N$ and $z_1,z_2\in \widetilde E$ such that $\{F^n(x):n\in M\}\subset U[z_1]$ and $\{F^n(y):n\in M\}\subset U[z_2]$. Pick $n,m\in M$ with $m>n$ and let $k=m-n$. Then
 $$
 (F^m(x),F^n(x))=(F^m(x),z_1)\circ (z_1,F^n(x))\in U\circ U^{-1}\subset Z\circ Z\subset W'.
 $$
Then by~\eqref{eq:finv2} we have that $(F^{k}(x),x)\in W'\subset W$. Analogously $(y,F^{k}(y))\in W$. Then
 $$
(F^k(x),F^k(y))=(F^k(x),x)\circ(x,y)\circ(y,F^k(y))\in W\circ V\circ W.
 $$
\end{proof}

% \textcolor{red}{Vladimir: It seems to me that in the lemma below the condition $U\circ V\in\cB$ if $U,V\in \cB$ can be omitted by the following reason. Let us say that $U \subset E\times E$ satisfies the $F$-contraction property, if
% $$
% (x,y)\in U \Rightarrow (F(x),F(y))\in U.
% $$
% Then it seems to me that for every two sets $U, V \subset E\times E$ that satisfy the $F$-contraction property, the corresponding $U \circ V$ also satisfies the $F$-contraction property. Indeed, let $(x, y) \in U \circ V$, then there is a $z \in X$ such that $(x, z) \in U$ and $(z, y) \in V$. In this case we have $(F(x), F(z)) \in U$ and $(F(z), F(y)) \in V$, so $(F(x), F(y)) \in U \circ V$. Am I missing something?}

\begin{corollary}\label{cor:expansiveclosed}
Let $(E,\cU)$ be a totally bounded uniform space, $\cB$ a basis for the uniformity and $F \colon E \to E$ a non-contractive bijection for $\cB$. Then $F$ satisfies that
 \begin{equation}
 (x,y)\in V\Rightarrow (F(x),F(y))\in \overline{V}
 \end{equation}
for every $V\in\cB$.
\end{corollary}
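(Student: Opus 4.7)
The plan is to combine Lemma~\ref{lem:totallybounded} with the elementary observation that, since $F$ is non-contractive for $\cB$, its inverse $F^{-1}$ is non-expansive for $\cB$: for every $V_0\in\cB$, the implication $(u,v)\in V_0\Rightarrow (F^{-1}(u),F^{-1}(v))\in V_0$ holds (substitute $u\mapsto F^{-1}(u)$, $v\mapsto F^{-1}(v)$ in \eqref{eq:finv2}). By induction, $F^{-n}$ preserves every basis entourage for each $n\geq 0$, which will allow us to pull back the information given by the Lemma from step $k$ of the orbit all the way down to step $1$ without any loss.

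Fix $(x,y)\in V\in\cB$ and an arbitrary $W\in\cB$. Lemma~\ref{lem:totallybounded} produces $k\in\N$ and $p,q\in E$ satisfying $(F^k(x),p)\in W$, $(p,q)\in V$, and $(q,F^k(y))\in W$. Applying $F^{-(k-1)}$ to each of these three memberships---legal because both $W$ and $V$ lie in $\cB$---one obtains
\[
(F(x),F^{-(k-1)}(p))\in W,\quad (F^{-(k-1)}(p),F^{-(k-1)}(q))\in V,\quad (F^{-(k-1)}(q),F(y))\in W,
\]
and concatenating the three yields $(F(x),F(y))\in W\circ V\circ W$.

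To deduce $(F(x),F(y))\in\overline{V}$, it suffices to check that every basic neighbourhood of $(F(x),F(y))$ in the product uniformity of $E\times E$ meets $V$. Any such neighbourhood contains one of the form $V_1[F(x)]\times V_2[F(y)]$ with $V_1,V_2\in\cU$; choosing $W\in\cB$ with $W\subset V_1\cap V_2^{-1}$ and rerunning the preceding paragraph, the witness pair $(F^{-(k-1)}(p),F^{-(k-1)}(q))\in V$ lies in that neighbourhood. The only delicate point is the initial observation that non-contractivity for $\cB$ transfers to non-expansivity of every negative iterate of $F$ on each basis entourage individually; once that is granted, the remainder is routine, with no modulus-of-uniform-continuity issue arising as $k$ grows.
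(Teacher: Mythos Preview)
Your argument is correct and is essentially the paper's own proof in different notation: where the paper picks $w,z$ with $F^k(w)=p$, $F^k(z)=q$ and then applies non-contractivity of $F$ a total of $k-1$ times to descend from level $k$ to level $1$, you apply the equivalent non-expansivity of $F^{-(k-1)}$ directly to $p,q$; the resulting witnesses $F(w),F(z)$ and $F^{-(k-1)}(p),F^{-(k-1)}(q)$ coincide. Your closing paragraph merely unpacks the identity $\bigcap_{W\in\cB}W\circ V\circ W=\overline{V}$ that the paper quotes without proof.
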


\begin{proof}
Choose $x,y\in E$ and $V\in\cB$ with $(x,y)\in V$. By Lemma~\ref{lem:totallybounded} we have that for each $W\in \cB$ there is $k\in \N$ such that $(F^k(x),F^k(y))\in W\circ V\circ W$. Then since $F^k$ is a bijection, we can choose $w,z\in E$ such that $(F^k(x),F^k(w))\in W$, $(F^k(w),F^k(z))\in V$ and $(F^k(z),F^k(y))\in W$. Since $F$ is a non-contractive map we have that $(F(x),F(w))\in W$, $(F(w),F(z))\in V$ and $(F(z),F(y))\in W$ so $(F(x),F(y))\in W\circ V\circ W$ and then
 $$
 (F(x),F(y))\in \bigcap_{W\in\cB}W\circ V\circ W=\overline{V}.
 $$
\end{proof}

Let $(E,d)$ be a metric space, if we denote $U_\epsilon=\{(x,y):d(x,y)<\epsilon\}$ then $\cB=\{U_\epsilon:\epsilon>0\}$ is a basis of the uniformity and $F:E\to E$ is non-expansive, non-contractive or an isometry for the metric $d$ if and only if $F$ is non-expansive, non-contractive or an isobasism for the basis of the uniformity $\cB$. Then Corollary~\ref{cor:expansiveclosed} implies the following result:

% Observe that in this case $U\circ V\in \cB$ if $U, V\in \cB$. \textcolor{red}{Vladimir: I don't see why the last claim is correct. The triangle inequality implies that $ U_\eps\circ U_\tau \subset U_{\eps + \tau}$ but it seems to me that the inverse inclusion is not always true. It is true for normed spaces or for convex subsets of normed spaces, but for example for $E = \{0, 3\} \subset \R$, we have $U_2 = \{(0,0), (3, 3)\}$, so $U_2 \circ U_2 = U_2 \neq U_4 = \{(0,0), (0,3), (3,0), (3, 3)\}$.
% Fortunately, if my previous ``red'' remark is correct, this difficulty disappears. }

\begin{corollary}[{\cite[Satz IV]{FH1936}}]
Let $(E,d)$ be a totally bounded metric space and $F:E\to E$ a bijective non-contractive (or non-expansive) map. Then $F$ is an isometry.
\end{corollary}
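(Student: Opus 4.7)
The plan is to reduce the metric statement to Corollary~\ref{cor:expansiveclosed} via the standard translation between metric and uniform concepts already outlined just before the corollary. First I set $U_\eps = \{(x,y) \in E \times E : d(x,y) < \eps\}$ and $\cB = \{U_\eps : \eps > 0\}$, a basis for the canonical uniformity on $(E,d)$. The hypothesis that $(E,d)$ is totally bounded as a metric space is equivalent to total boundedness of $(E,\cU)$ as a uniform space, and a map is non-contractive (resp.\ non-expansive, resp.\ an isometry) for $d$ exactly when it is non-contractive (resp.\ non-expansive, resp.\ an isobasism) for the basis $\cB$.

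Assume first that $F$ is a non-contractive bijection for $d$. By the preceding paragraph the hypotheses of Corollary~\ref{cor:expansiveclosed} are satisfied, so for every $\eps > 0$ and every $x,y \in E$ with $d(x,y) < \eps$ we have $(F(x),F(y)) \in \overline{U_\eps}$. Because $d : E \times E \to \R$ is continuous, $\overline{U_\eps} \subseteq \{(u,v) : d(u,v) \le \eps\}$, hence $d(F(x),F(y)) \le \eps$. Given arbitrary $x,y \in E$, taking $\eps \downarrow d(x,y)$ yields $d(F(x),F(y)) \le d(x,y)$, i.e.\ $F$ is non-expansive. Combined with the hypothesis $d(F(x),F(y)) \ge d(x,y)$, this gives $d(F(x),F(y)) = d(x,y)$, so $F$ is an isometry.

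For the non-expansive case I would pass to the inverse: if $F$ is a non-expansive bijection then, setting $u = F(x)$, $v = F(y)$, the inequality $d(F(x),F(y)) \le d(x,y)$ rewrites as $d(u,v) \le d(F^{-1}(u),F^{-1}(v))$, so $F^{-1} : E \to E$ is a non-contractive bijection on the totally bounded space $(E,d)$. By the case just proved, $F^{-1}$ is an isometry, hence so is $F$.

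The proof is essentially a bookkeeping exercise once Corollary~\ref{cor:expansiveclosed} is in hand; the only content is the passage $\overline{U_\eps} \subseteq \{d \le \eps\}$ (via continuity of $d$) followed by taking the infimum over $\eps > d(x,y)$. The main conceptual point, and the only place where one must be careful, is that Corollary~\ref{cor:expansiveclosed} only produces membership in the \emph{closure} $\overline{V}$ rather than in $V$ itself, so one cannot conclude the isometry property at a single scale $\eps$ — one must let $\eps$ shrink to $d(x,y)$.
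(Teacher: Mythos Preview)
Your proof is correct and follows exactly the route the paper indicates: translate the metric notions into the uniform basis $\cB=\{U_\eps\}$, apply Corollary~\ref{cor:expansiveclosed}, use $\overline{U_\eps}\subset\{d\le\eps\}$ and let $\eps\downarrow d(x,y)$ to recover non-expansiveness, then handle the non-expansive case by passing to $F^{-1}$. The paper leaves precisely these bookkeeping details to the reader, and you have filled them in correctly.
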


% \textcolor{red}{Again, if my first ``red'' remark is correct, the next corollary can be substituted by the following one, that looks more natural for me: let $X$ be a topological vector space, $A\subset X$ a totally bounded set and $\cB$ a basis of closed neighborhoods of 0. Let $F: A\to A$ be a bijection such that for every $x, y \in A$ and $V\in\cB$
% $$
% F(x)-F(y)\in V \Rightarrow x-y\in V.
% $$
% Then $f$ satisfies that for every $x, y \in A$ and $V\in\cB$
% $$
% x-y \in V \Rightarrow F(x)-F(y) \in V.
% $$
% }

\begin{corollary}
Let $X$ be a topological vector space, $A\subset X$ a totally bounded set and $\cB$ a basis of closed neighborhoods of 0. Let $F: A\to A$ be a bijection such that for every $x, y \in A$ and $V\in\cB$
 $$
 F(x)-F(y)\in V \Rightarrow x-y\in V.
 $$
Then $f$ satisfies that for every $x, y \in A$ and $V\in\cB$
 $$
 x-y \in V \Rightarrow F(x)-F(y) \in V.
 $$
\end{corollary}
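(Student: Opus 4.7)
The plan is to reduce this corollary to Corollary~\ref{cor:expansiveclosed} by endowing $A$ with the uniformity it inherits as a subset of the topological vector space $X$. For each $V\in\cB$ I set $\widetilde V:=\{(x,y)\in A\times A : x-y\in V\}$ and let $\widetilde\cB:=\{\widetilde V : V\in\cB\}$. Since $\cB$ is a neighborhood basis of $0$ in $X$, the family $\widetilde\cB$ is a basis for the canonical uniformity on $A$ coming from the TVS structure of $X$.

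Under this translation, the hypothesis that $F(x)-F(y)\in V$ implies $x-y\in V$ is precisely the statement that $F$ is a non-contractive bijection for $\widetilde\cB$. Moreover, total boundedness of $A$ as a TVS subset --- for every $V\in\cB$ there is a finite $\widetilde E\subset A$ with $A\subset\widetilde E+V$ --- rewrites verbatim as $A=\widetilde V[\widetilde E]$, so $(A,\widetilde\cB)$ is totally bounded in the sense of Section~\ref{sec:uniformities}.

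Corollary~\ref{cor:expansiveclosed} then applies and yields, for every $V\in\cB$, that $(x,y)\in\widetilde V$ implies $(F(x),F(y))\in\overline{\widetilde V}$, the closure being taken in $A\times A$. The only remaining point --- and the one place where the hypothesis that each $V\in\cB$ is \emph{closed} gets used --- is to observe that $\widetilde V$ is itself closed in $A\times A$: since subtraction $(x,y)\mapsto x-y$ is continuous on $X\times X$ and $V$ is closed in $X$, the set $\{(x,y)\in X\times X : x-y\in V\}$ is closed in $X\times X$, and its trace on $A\times A$ is exactly $\widetilde V$. Hence $\overline{\widetilde V}=\widetilde V$, and the desired implication $x-y\in V\Rightarrow F(x)-F(y)\in V$ follows.

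I do not expect any substantial obstacle: the argument is essentially a dictionary translation between the TVS formulation and the uniform-space formulation developed in Section~\ref{sec:uniformities}, and the closedness of each $V\in\cB$ is exactly what allows us to discard the closure operation appearing in Corollary~\ref{cor:expansiveclosed}.
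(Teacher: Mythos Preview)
Your argument is correct and follows essentially the same route as the paper's own proof: both apply Corollary~\ref{cor:expansiveclosed} to the uniformity basis $\{U_V : V\in\cB\}$ on $A$, where $U_V=\{(x,y):x-y\in V\}$. You supply the one detail the paper leaves implicit, namely that closedness of each $V\in\cB$ forces $U_V$ to be closed in $A\times A$ (via continuity of subtraction), so the closure in the conclusion of Corollary~\ref{cor:expansiveclosed} may be dropped.
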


\begin{proof}
This result follows from Corollary~\ref{cor:expansiveclosed} applied to the set $A$ and the basis for a uniformity $\{U_V:V\in\cB\}$ where $U_V=\{(x,y):x-y\in V\}$ for each $V\in\cB$.
\end{proof}

The following result is a reformulation of the last corollary:
\begin{corollary}
Let $X$ be a topological vector space, $A\subset X$ a totally bounded set and $\cB$ a basis of closed neighborhoods of 0. Let $F: A\to A$ be a bijection. If there is $x,y\in A$ and $V\in\cB$ such that
 $$
 x-y\in V\text{ and }F(x)-F(y)\notin V,
 $$
then there is $z,w\in A$ and $W\in \cB$ such that
 $$
 F(z)-F(w)\in W \text{ and } z-w\notin W.
 $$
\end{corollary}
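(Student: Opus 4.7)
The plan is to observe that this corollary is simply the contrapositive of the previous one, so nothing new is to be proved; the task is merely to make the logical equivalence explicit and to verify that the bijection hypothesis carries through. The previous corollary asserts, for a bijection $F:A\to A$ with $A$ totally bounded, the implication
\[
\bigl[\forall z,w\in A,\ \forall W\in\cB\dopu F(z)-F(w)\in W \Rightarrow z-w\in W\bigr]
\]
\[
\Longrightarrow \bigl[\forall x,y\in A,\ \forall V\in\cB\dopu x-y\in V \Rightarrow F(x)-F(y)\in V\bigr].
\]

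So I would argue by contrapositive: suppose the conclusion of the current corollary fails, i.e., suppose that for every $z,w\in A$ and every $W\in\cB$ the implication $F(z)-F(w)\in W\Rightarrow z-w\in W$ holds. Then $F$ satisfies the hypothesis of the previous corollary verbatim, and since $F$ is a bijection of the totally bounded set $A$, the previous corollary applies and yields the conclusion $x-y\in V \Rightarrow F(x)-F(y)\in V$ for all $x,y\in A$ and all $V\in\cB$. This directly contradicts the assumed existence of $x,y\in A$ and $V\in\cB$ with $x-y\in V$ and $F(x)-F(y)\notin V$.

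There is no real obstacle here; the only things to check are that both quantifier patterns match exactly (they do: both are ``$\forall$ pairs in $A$, $\forall V\in\cB$'') and that the bijectivity plus total boundedness assumptions are the same in both statements (they are). Thus the proof is just the single sentence: \emph{this is the contrapositive of the previous corollary}, and I would present it in one or two lines in the paper.
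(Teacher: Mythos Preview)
Your proposal is correct and matches the paper's own treatment: the paper introduces this corollary with the sentence ``The following result is a reformulation of the last corollary'' and gives no further proof. Your explicit contrapositive argument is exactly the intended justification.
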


%%%%%%%%%%%%%%%%%%%%%%%%%%%%%%%%%%%%%%%%%%%%%%%%%%%%%%%%%%%%%
%%%% Section Banach previous results %%%%%%%
%%%%%%%%%%%%%%%%%%%%%%%%%%%%%%%%%%%%%%%%%%%%%%%%%%%%%%%%%%%%%

\section{Notation and auxiliary statements for Banach spaces}\label{sec:previousbanach}

In this short section we fix the necessary notation and recollect some known results that we will need in the sequel. Below the letters $X$ and $Y$ always stand for real Banach spaces. We denote by $S_X$ and $B_X$ the unit sphere and the closed unit ball of $X$ respectively. For a convex set $A \subset X$ denote by $\ex(A)$ the set of extreme points of $A$; that is, $x \in \ex(A)$ if $x \in A$ and for every $y \in X\setminus\{0\}$ either $x + y \not\in A$ or $x - y \not\in A$. Recall that $X$ is called strictly convex if all elements of $S_X$ are extreme points of $B_X$, or in other words, $S_X$ does not contain non-trivial line segments. Strict convexity of $X$ is equivalent to the strict triangle inequality $\|x +y\| < \|x\| + \|y\|$ holding for all pairs of vectors $x, y \in X$ that do not have the same direction.
For subsets $A, B \subset X$ we use the standard notation $A+B = \{x + y\dopu x \in A, y \in B\}$ and $a A = \{ax \dopu x \in A\}$.

\begin{propos}[P.~Mankiewicz's \cite{mank}] \label{Mankiewicz}
If $A\subset X$ and $B \subset Y$ are convex subsets with non-empty interior, then every bijective isometry $F : A \to B$ can be extended to a bijective affine isometry $\widetilde F : X \to Y$.
\end{propos}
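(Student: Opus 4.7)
The plan is to reduce the statement to an application of the classical Mazur-Ulam theorem by extending $F$ to all of $X$ via homogeneity. First I normalize: pick $a_0\in\mathrm{int}(A)$ and replace $A$ by $A-a_0$, $B$ by $B-F(a_0)$, and $F$ by $x\mapsto F(x+a_0)-F(a_0)$. Since translations are affine isometries, this reduction is harmless, and we may assume $0\in\mathrm{int}(A)$ and $F(0)=0$. The central step is to show that $F$ preserves midpoints on $A$, i.e.\ $F((a+b)/2)=(F(a)+F(b))/2$ for all $a,b\in A$. Classical Mazur-Ulam yields this for bijective isometries between whole normed spaces via a reflection/iteration argument, but here $F$ is defined only on the convex subset $A$, so a local variant is needed: for $a,b\in\mathrm{int}(A)$ one works inside a neighborhood of the midpoint $m=(a+b)/2$ that is symmetric about $m$ and contained in $A$, so that the reflection $x\mapsto 2m-x$ and its compositions with $F$ stay inside $A$ and the Mazur-Ulam group argument can be carried out. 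I expect this adaptation of the reflection trick to the bounded convex setting to be the main technical obstacle.

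Having established midpoint preservation on $A$, dyadic approximations together with continuity of $F$ give full affineness of $F$ on $A$; combined with $F(0)=0$, this yields $F(\lambda a)=\lambda F(a)$ whenever $a,\lambda a\in A$. Define $\widetilde F\colon X\to Y$ by $\widetilde F(x)=t^{-1}F(tx)$ for any $t\in(0,1]$ small enough that $tx\in A$. The value is independent of $t$: for $0<s<t$ with $sx,tx\in A$, linearity gives $F(sx)=(s/t)F(tx)$, hence $s^{-1}F(sx)=t^{-1}F(tx)$. Additivity $\widetilde F(x+y)=\widetilde F(x)+\widetilde F(y)$ follows from midpoint preservation: for sufficiently small $t$ one has $F(t(x+y)/2)=(F(tx)+F(ty))/2$ and also $F(t(x+y)/2)=\tfrac12 F(t(x+y))$, so $F(t(x+y))=F(tx)+F(ty)$. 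Homogeneity $\widetilde F(\lambda x)=\lambda\widetilde F(x)$ for $\lambda>0$ is immediate by rescaling $t$; for $\lambda<0$ use $F(-tx)=-F(tx)$, which follows from $0=F(0)=(F(tx)+F(-tx))/2$. The isometry property is $\|\widetilde F(x)\|=t^{-1}\|F(tx)-F(0)\|=t^{-1}\|tx\|=\|x\|$, and taking $t=1$ for $x\in A$ gives $\widetilde F|_A=F$.

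Finally, $\widetilde F$ is a linear isometric embedding of $X$ into $Y$, so $\widetilde F(X)$ is a linear subspace of $Y$ containing $\widetilde F(A)=F(A)=B$. Since $B$ has non-empty interior in $Y$ by hypothesis and a linear subspace of $Y$ with non-empty interior equals $Y$, the map $\widetilde F$ is surjective. Thus $\widetilde F$ is the required bijective affine isometry extending $F$.
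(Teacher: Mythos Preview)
The paper does not give its own proof of this proposition: it is quoted as Mankiewicz's 1972 theorem and used as a black box. So there is nothing in the paper to compare your argument against.

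On the merits, your outline has the right architecture and everything from ``Having established midpoint preservation'' onward is correct and cleanly written. The problem is that midpoint preservation is the entire content of Mankiewicz's theorem, and you have not proved it; you say you ``expect this adaptation of the reflection trick to the bounded convex setting to be the main technical obstacle'' and stop. Worse, the adaptation you sketch does not work as stated. The Mazur--Ulam reflection argument produces a self-map $\psi$ fixing $a,b$ with $\|\psi(m)-m\|=2\|F(m)-m'\|$, and the contradiction comes from the iterates $\psi^{n}(m)$ escaping to infinity while remaining at distance $\|a-m\|$ from $a$. If $F$ is only defined on $A$, then $\psi$ is only defined on part of $A$, and the iterates $\psi^{n}(m)$ leave that domain \emph{before} any contradiction is reached; restricting to a small symmetric neighbourhood of $m$ makes this worse, not better. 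A genuine local argument (for instance V\"ais\"al\"a's metric characterization of the midpoint via nested equidistant sets, applied to $a,b$ close enough that all the relevant sets lie inside $A$ and $B$, followed by a bootstrap along segments to arbitrary $a,b\in A$) is needed here, and you have not supplied one. Until that gap is closed, the proposal is a plan rather than a proof.
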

Taking into account that in the case of $A$, $B$ being the unit balls every isometry maps 0 to 0, this result implies that every bijective isometry $F : B_X \to B_Y$ is the restriction of a linear isometry from $X$ onto $Y$.

 \begin{propos} [Brower's invariance of domain principle \cite{Brouwer1912}] \label{Brower}
 Let $U$ be an open subset of $\R^n$ and $f : U \to \R^n$ be an injective continuous map, then $f(U)$ is open in $\R^n$.
 \end{propos}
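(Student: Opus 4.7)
The plan is to reduce to the local statement that for each $x\in U$ the point $f(x)$ is interior to $f(U)$, and then to prove this via Brouwer's no-retraction theorem (equivalently, via the nontriviality of top-dimensional local singular homology of $\R^n$ at each point).

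Fix $x\in U$, choose $r>0$ small enough that $B:=\overline{B}(x,r)\subset U$, and set $S:=\partial B$. Since $B$ is compact and $f$ is a continuous injection, $f|_B\colon B\to f(B)$ is a homeomorphism, so $f(x)\notin f(S)$, and by compactness there exists $\rho>0$ with $\overline{B}(f(x),\rho)\cap f(S)=\emptyset$. The task then reduces to proving $B(f(x),\rho)\subset f(B)$. Suppose, for contradiction, that some $y\in B(f(x),\rho)$ lies outside $f(B)$; pulling back a radial projection centred at $y$ through the homeomorphism $f|_B$, and combining it with a suitable reparametrisation of $\partial B(f(x),\rho)$ onto $S$, one would like to build a continuous map $B\to S$ whose restriction to $S$ is the identity, contradicting Brouwer's no-retraction theorem.

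The genuine obstacle is making this last construction rigorous: one must verify that the composition is continuous, takes values in $S$, and really reduces to the identity on $\partial B$. The cleanest way is via local singular homology, where the nontriviality
\[
H_n(\R^n,\R^n\setminus\{p\})\cong\mathbb{Z}\qquad (p\in\R^n)
\]
does the essential work and the no-retraction theorem appears as a direct corollary. Alternatively one can replace the explicit retraction by a Brouwer degree argument: the degree of the map $S\to \R^n\setminus\{q\}$ induced by $f|_S$ at $q=f(x)$ is $\pm 1$, depends continuously on $q$ within the connected component of $\R^n\setminus f(S)$ containing $f(x)$, and forces every $y$ with nonzero degree to lie in $f(B)$. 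Both routes ultimately rest on the same classical topological input about top-dimensional local homology of $\R^n$, and that input is what the statement is really a polite packaging of.
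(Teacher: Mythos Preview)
The paper does not prove this proposition at all: it is stated as a classical result with a citation to Brouwer's 1912 paper and is used as a black box later on. So there is nothing to compare your argument against.

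That said, your sketch is a reasonable outline of one of the standard routes to invariance of domain. The reduction to showing that $f(x)$ is interior to $f(B)$ for a small closed ball $B\subset U$ is fine, and the observation that $f|_B$ is a homeomorphism onto its image (compact domain, Hausdorff range) is correct. The part you flag as the ``genuine obstacle'' is indeed where all the content lies: the naive retraction construction does not quite work as stated, because composing the radial projection from $y$ with $(f|_B)^{-1}$ lands you in $B$, not in $S$, and there is no a priori reason the resulting map restricts to the identity on $S$. You are right that the clean way to finish is either via local relative homology $H_n(\R^n,\R^n\setminus\{p\})\cong\mathbb{Z}$ or via the Brouwer degree argument you outline: the degree of $f|_S$ relative to $f(x)$ is $\pm 1$, it is locally constant on $\R^n\setminus f(S)$, and points of nonzero degree must be hit by $f|_B$. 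Either of these is a legitimate proof once the relevant homology or degree machinery is in place; just be aware that the ``retraction'' version as you first phrased it is heuristic rather than a proof.
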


\begin{propos}[{\cite[Proposition 4]{KZ}}] \label{prop-surject}
Let $X$ be a finite-dimensional normed space and $V$ be a subset of $B_X$ with the following two properties: $V$ is homeomorphic to $B_X$ and $V \supset S_X$. Then $V=B_X$.
\end{propos}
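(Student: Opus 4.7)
The plan is to combine Brouwer's invariance of domain (Proposition~\ref{Brower}) with the connectedness of $\text{int}(B_X)$. Fix a homeomorphism $h : B_X \to V$, set $n = \dim X$, and identify $X$ with $\R^n$. The cases $n = 0$ and $n = 1$ are immediate (in the second, $V$ is a compact connected subset of $[-1,1]$ containing both endpoints, hence equal to $[-1,1]$), so I assume $n \ge 2$.

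First, the restriction $h|_{\text{int}(B_X)} : \text{int}(B_X) \to \R^n$ is a continuous injection defined on an open subset of $\R^n$, so Proposition~\ref{Brower} gives that $h(\text{int}(B_X))$ is open in $X$. Being open in $X$ and contained in $B_X$, it must lie in $\text{int}(B_X)$; in particular $h(\text{int}(B_X)) \cap S_X = \emptyset$. Combining the decomposition $V = h(\text{int}(B_X)) \sqcup h(S_X)$ with the hypothesis $S_X \subset V$ then forces $S_X \subset h(S_X)$.

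I would then show $h(S_X) = S_X$. Both sides are compact topological $(n-1)$-manifolds, so the inclusion $\iota : S_X \hookrightarrow h(S_X)$ is a continuous injection between $(n-1)$-manifolds of the same dimension. Applying Proposition~\ref{Brower} in local charts on $S^{n-1}$ shows that $\iota$ is an open map, so $S_X$ is open in $h(S_X)$; it is also closed there (being compact), hence clopen, and connectedness of $h(S_X) \cong S^{n-1}$ yields $S_X = h(S_X)$.

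With $h(S_X) = S_X$ in hand, one gets $V \cap \text{int}(B_X) = h(\text{int}(B_X))$, and therefore
$$\text{int}(B_X) \;=\; h(\text{int}(B_X)) \,\sqcup\, \bigl(\text{int}(B_X) \setminus V\bigr)$$
is a disjoint union of two open subsets of $X$: the first is open by invariance of domain, the second because $V$ is compact and hence closed in $X$. Since $\text{int}(B_X)$ is convex, hence connected, and $h(\text{int}(B_X))$ is non-empty, connectedness forces $\text{int}(B_X) \setminus V = \emptyset$, giving $V \supset \text{int}(B_X) \cup S_X = B_X$. The principal difficulty is the middle step: Proposition~\ref{Brower} is stated only for open subsets of $\R^n$, so transferring it to the $(n-1)$-sphere requires reading it through local charts on $S^{n-1}$; once that is set up, every remaining step is routine.
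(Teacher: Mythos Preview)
The paper does not supply its own proof of this proposition; it is quoted from \cite{KZ} and used as a black box, so there is nothing in the present paper to compare your argument against.

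Your proof is correct. The reduction to showing $h(S_X)=S_X$ is the right move, and your plan for that step---promoting Proposition~\ref{Brower} from open subsets of $\R^{n-1}$ to $(n-1)$-manifolds by reading it through charts on $S^{n-1}$---is exactly the standard manoeuvre (it yields ``invariance of domain for manifolds'': a continuous injection between manifolds without boundary of the same dimension is open). Once $h(S_X)=S_X$, the decomposition $\text{int}(B_X)=h(\text{int}(B_X))\sqcup(\text{int}(B_X)\setminus V)$ into two open pieces and the connectedness argument finish things cleanly. Your identification of the chart step as the only non-routine point is accurate.
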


The remaining results of this section listed below appeared first in \cite{CKOW2016} for the particular case of $X = Y$. The generalizations to the case of two different spaces were made in \cite{Zav} and \cite{KZ2017}.

The following theorem appears in \cite[Theorem 2.1]{Zav} and it can be demonstrated repeating the proof of \cite[Theorem 2.3]{CKOW2016} almost word to word (see \cite[Theorem 2.3]{Zav2} for details).
\begin{theo} \label{wing-sc}
Let $F: B_X \to B_Y$ be a bijective non-expansive (briefly, a BnE) map. In the above notations the following hold.
\begin{enumerate}
\item $F(0) = 0$.
\item $F^{-1}(S_Y) \subset S_X$.
\item If $F(x)$ is an extreme point of $B_Y$, then $x$ is also an extreme point of $B_X$, $F(ax) = aF(x)$ for all $a \in [-1,1]$.

\end{enumerate}
Moreover, if $Y$ is strictly convex, then
\begin{enumerate}
\item[(i)] $F$ maps $S_X$ bijectively onto $S_Y$;
\item[(ii)] $F(ax) = a F(x)$ for all $x \in S_X$ and $a \in [-1, 1]$.
\end{enumerate}
\end{theo}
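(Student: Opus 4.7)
The plan is to proceed through the claims in order. For (1), I would exploit the surjectivity $F(B_X) = B_Y$ together with the non-expansive bound $\|F(y) - F(0)\| \leq \|y\| \leq 1$, which forces $B_Y = F(B_X) \subset F(0) + B_Y$; if $F(0) \neq 0$, testing this inclusion against the point $y_0 = -F(0)/\|F(0)\| \in B_Y$ yields $\|y_0 - F(0)\| = 1 + \|F(0)\| > 1$, a contradiction. Claim (2) is then immediate: if $F(x) \in S_Y$, then $1 = \|F(x)\| = \|F(x) - F(0)\| \leq \|x\| \leq 1$, forcing $\|x\| = 1$.

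The substance lies in (3). Fix $f = F(x)$ and assume $f \in \ex(B_Y)$; by (2) we have $x \in S_X$. For each $t \in [0,1]$ the bounds $\|F(tx)\| \leq t$ and $\|f - F(tx)\| \leq 1 - t$ combine with the triangle inequality $\|f\| \leq \|F(tx)\| + \|f - F(tx)\|$ to force equality throughout, whence $\|F(tx)\| = t$ and $\|f - F(tx)\| = 1 - t$. This exhibits $f$ as the convex combination
\[
f = t \cdot \frac{F(tx)}{t} + (1-t) \cdot \frac{f - F(tx)}{1-t}
\]
of two unit vectors weighted by $t$ and $1-t$, so extremity of $f$ forces both to equal $f$, giving $F(tx) = tf$. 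To extend the scaling to $t \in [-1,0]$, I would apply the same argument to the extreme point $-f$ and its preimage $z := F^{-1}(-f) \in S_X$ (which yields $F(tz) = -tf$ on $[0,1]$), and then note that non-expansiveness gives $2 = \|F(z) - F(x)\| \leq \|z - x\| \leq \|z\| + \|x\| = 2$; the equality case of $\|z + (-x)\| \leq \|z\| + \|-x\|$ combined with $\|z\| = \|x\| = 1$ pins down $z = -x$. Together, $F(ax) = af$ for every $a \in [-1,1]$.

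The hard part will be upgrading this to extremity of $x$ in $B_X$. I would argue by contradiction, writing $x = (u+v)/2$ with $u \neq v$ in $B_X$. The strategy is to combine the rigidity $F(ax) = af$ just established on $[-x,x]$, injectivity of $F$, and the non-expansive estimates $\|F(u) - f\| + \|F(v) - f\| \leq \|u - v\|$ and $\|F(u) - F(v)\| \leq \|u-v\|$, in order to force $f$ to coincide with the midpoint of $F(u)$ and $F(v)$; extremity of $f$ then forces $F(u) = F(v) = f$, contradicting injectivity of $F$. Turning the inequality ``$f$ is within $\|u-v\|/2$ of the midpoint'' into the identity ``$f$ equals the midpoint'' is the subtle step and is where the real work of this claim lies.

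For the moreover clause, strict convexity of $Y$ identifies $S_Y$ with $\ex(B_Y)$. One inclusion of (i), namely $F^{-1}(S_Y) \subset S_X$, is part (2). For the reverse inclusion $F(S_X) \subset S_Y$, if some $x \in S_X$ satisfied $\|F(x)\| < 1$, then $F(x)/\|F(x)\| \in S_Y = \ex(B_Y)$ would have a preimage $z \in S_X$, and the scaling from (3) applied at $z$ would give $F(\|F(x)\|\, z) = \|F(x)\|\, F(z) = F(x)$; injectivity then forces $x = \|F(x)\|\, z$, contradicting $\|x\| = 1$. Hence $F$ maps $S_X$ bijectively onto $S_Y$, proving (i), and (ii) is (3) applied to each $x \in S_X$, whose image is automatically an extreme point of $B_Y$.
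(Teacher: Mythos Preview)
The paper does not prove this theorem; it is quoted from \cite{Zav} with the remark that the argument follows \cite{CKOW2016}. So there is no in-paper proof to compare against, and I assess your proposal on its own merits. Parts (1), (2), and the ``moreover'' clause are fine, and so is your derivation of $F(tx)=tf$ for $t\in[0,1]$.

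There is, however, a genuine error in (3). From $\|z-x\|=2$ and $\|z\|=\|x\|=1$ you conclude $z=-x$ by ``the equality case of the triangle inequality''. That implication is false outside strictly convex spaces: in $\ell_\infty^2$ take $x=(1,1)$ and $z=(-1,1)$; then $\|x\|=\|z\|=1$, $\|x-z\|=2$, yet $z\neq -x$. So your route to $F(ax)=af$ for $a\in[-1,0]$ collapses. A correct argument does not use the triangle-inequality equality case at all: with $z=F^{-1}(-f)$ one first shows $F(tz)=-tf$ on $[0,1]$ exactly as you did for $x$, and then looks at $q=\tfrac{x+z}{2}\in B_X$. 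Non-expansiveness gives
\[
\Bigl\|F(q)-\tfrac12 f\Bigr\|\le\Bigl\|q-\tfrac12 x\Bigr\|=\tfrac12,\qquad
\Bigl\|F(q)+\tfrac12 f\Bigr\|\le\Bigl\|q-\tfrac12 z\Bigr\|=\tfrac12,
\]
and summing forces equality. Writing $f=(\tfrac12 f-F(q))+(\tfrac12 f+F(q))$ as a sum with $\|\cdot\|$'s adding to $1$ and using that $f$ is extreme yields $F(q)\in\mathbb{R}f$, and then the two displayed bounds pin down $F(q)=0$; injectivity gives $q=0$, i.e.\ $z=-x$.

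The second gap is the extremity of $x$. You correctly identify it as the crux but do not prove it: the estimates you list only give $\|F(u)-f\|,\|F(v)-f\|\le \tfrac12\|u-v\|$ and $\|F(u)-F(v)\|\le\|u-v\|$, which by themselves do not force $f=\tfrac12(F(u)+F(v))$ (indeed, even in Hilbert space those three inequalities admit $F(u)\neq F(v)$ with $f$ off the midpoint). The standard proofs use additional leverage---either the full relation $F(ax)=af$ on $[-1,1]$ just established, or the $D_1$-type intersection sets the paper develops later---to squeeze $F(x\pm h)$ onto the line $\mathbb{R}f$ via the extremity of $f$ and then invoke injectivity. As written, your sketch for this step is a statement of intent rather than an argument.
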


\begin{lem}[{\cite[Lemma 2.3]{Zav}}] \label{conv-smooth-prel}
 Let $F: B_X \to B_Y$ be a BnE map such that $F(S_X) = S_Y$. Let $V \subset S_X$ be the subset of all those $v \in S_X$ that $F(av) = a F(v)$ for all $a \in [-1,1]$. Denote $A = \{tx: x \in V, t \in [-1,1] \}$, then $F|_A$ is a bijective isometry between $A$ and $F(A)$.
\end{lem}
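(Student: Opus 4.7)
The plan is to split the claim into two parts: first, that $F|_A : A \to F(A)$ is a bijection, and second, that it is distance-preserving. The bijection is immediate, since $F$ is a bijection $B_X \to B_Y$ and hence its restriction to any subset is a bijection onto its image.

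For the isometry, fix $p_i = a_i v_i \in A$ with $v_i \in V$ and $a_i \in [-1,1]$, so by definition of $V$ one has $F(p_i) = a_i F(v_i)$. I would first observe that $V$ is centrally symmetric: if $v \in V$, then for each $a \in [-1,1]$
\[
F\bigl(a(-v)\bigr) = F\bigl((-a)v\bigr) = -a\,F(v) = a\,F(-v),
\]
so $-v \in V$; hence $A$ is symmetric and $F(-p) = -F(p)$ for every $p \in A$. Applying the non-expansivity of $F$ to the pairs $(p_1,p_2)$ and $(p_1,-p_2)$ (using $F(-p_2)=-F(p_2)$) then gives
\[
\|F(p_1) - F(p_2)\| \leq \|p_1 - p_2\|, \qquad \|F(p_1) + F(p_2)\| \leq \|p_1 + p_2\|.
\]

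The heart of the argument is to upgrade the first of these inequalities to equality. The plan is to exploit the rigidity of the four-point antipodal configuration $\{p_1,-p_1,p_2,-p_2\} \subset A$: two of its six pairwise distances, the diagonals $\|p_i-(-p_i)\| = 2|a_i|$, are preserved exactly by $F$ (since $F(p_i)-F(-p_i) = 2a_i F(v_i)$ has norm $2|a_i|$ as $F(v_i) \in S_Y$), whereas the remaining four distances are a priori only non-expansively contracted. A cycle / total-perimeter argument in the spirit of the Freudenthal--Hurewicz proof recalled at the start of the introduction, combined with the bijectivity of $F$ and (if needed) a reduction to the finite-dimensional slice $\spn\{v_1,v_2\}$ where Propositions~\ref{Brower} and \ref{prop-surject} become available, should force each of the four contracting distances to be an equality, yielding $\|F(p_1)-F(p_2)\| = \|p_1-p_2\|$ as desired.

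The step I expect to be genuinely delicate is precisely this upgrade from the non-expansive inequality to an isometric equality. Unlike the totally bounded setting of Section~\ref{sec:uniformities}, the ball $B_X$ need not be compact, so one cannot simply iterate $F$ and pass to a pointwise limit. One must instead leverage the maximality of $V$ together with the bijectivity of $F$ (equivalently, the non-contractivity of $F^{-1}$) applied to carefully chosen auxiliary points in $A$, in order to close the gap.
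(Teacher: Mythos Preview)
The paper does not actually prove this lemma; it is quoted verbatim from \cite[Lemma~2.3]{Zav} (with detailed proof in \cite{Zav2}), so there is no in-paper argument to compare against. What can be assessed is whether your sketch stands on its own, and here there is a genuine gap.

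Your setup is fine: the symmetry of $V$, the identity $F(-p)=-F(p)$ on $A$, and the two non-expansive inequalities are all correct and are indeed the starting point. The problem is the ``heart of the argument'', which you explicitly leave open. Neither of the mechanisms you propose can close it. The Freudenthal--Hurewicz idea works because one iterates a self-map of a totally bounded space and extracts recurrence; but $F$ does not send the four-point set $\{p_1,-p_1,p_2,-p_2\}$ (or any finite configuration containing it) to itself, so there is nothing to iterate. Likewise, the ``finite-dimensional slice'' $\spn\{v_1,v_2\}$ is not $F$-invariant --- $F(v_1),F(v_2)$ live in $Y$, and even identifying $X$ with $Y$ there is no reason for $F$ to respect this plane --- so Propositions~\ref{Brower} and~\ref{prop-surject} have no purchase. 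Knowing only that the two diagonals $2|a_i|$ are preserved while the four sides are contracted gives, via the triangle inequality, $\|F(p_1)-F(p_2)\|+\|F(p_1)+F(p_2)\|\ge 2$ and the analogous bound in $X$; but since the $X$-sum need not equal $2$, this alone cannot force the individual equalities.

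What you are not using --- and what the cited proof does use --- is the global hypothesis $F(S_X)=S_Y$. This is strictly stronger than $V\subset S_X$: it says \emph{every} sphere point maps to the sphere, which combined with item~(2) of Theorem~\ref{wing-sc} gives $F^{-1}(S_Y)=S_X$ and hence norm preservation along every radius (cf.\ the computation in Lemma~\ref{normeq}). The isometry on $A$ is then obtained by chaining inequalities through auxiliary radial points and exploiting that all the intermediate norms are exactly known, not by a compactness or degree argument. As written, your proposal never invokes $F(S_X)=S_Y$ beyond the trivial fact that $F(v_i)\in S_Y$, and without it the upgrade step cannot be completed.
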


\begin{lem}[{\cite[Lemma 2.9]{KZ2017}}]\label{preim-of-sph}
 Let $F: B_X \to B_Y$ be a BnE map such that for every $v \in F^{-1}(S_Y)$ and every $t \in [-1,1]$ the condition
$F(tv) = t F(v)$ holds true. Then $F$ is an isometry.
\end{lem}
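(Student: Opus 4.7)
The plan is to reduce the statement to Lemma~\ref{conv-smooth-prel}. To apply that lemma I need the hypothesis $F(S_X)=S_Y$, and once this is in hand the set $V$ in Lemma~\ref{conv-smooth-prel} equals all of $S_X$ (because our hypothesis asserts $F(tv)=tF(v)$ for every $v\in F^{-1}(S_Y)=S_X$), so $A=\{tx:x\in S_X,\ t\in[-1,1]\}=B_X$ and Lemma~\ref{conv-smooth-prel} yields that $F=F|_A$ is an isometry from $B_X$ onto $F(B_X)=B_Y$. Thus the whole proof amounts to establishing the single intermediate fact $F(S_X)=S_Y$.

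For this, the inclusion $F^{-1}(S_Y)\subset S_X$ is given by Theorem~\ref{wing-sc}(2). The reverse inclusion $S_X\subset F^{-1}(S_Y)$ is the genuine content and is where I expect the main work. I would argue by contradiction: take $x\in S_X$ and suppose $\|F(x)\|<1$. Set $z:=F(x)/\|F(x)\|\in S_Y$; by bijectivity there exists a (unique) $y\in B_X$ with $F(y)=z$, and since $y\in F^{-1}(S_Y)\subset S_X$ we have $\|y\|=1$. The hypothesis applies to $y$, so for $t:=\|F(x)\|\in[0,1]$ we get
\[
F(ty)=tF(y)=tz=\|F(x)\|\cdot\frac{F(x)}{\|F(x)\|}=F(x).
\]
Injectivity of $F$ forces $ty=x$, whence $1=\|x\|=t\|y\|=\|F(x)\|<1$, a contradiction.

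Once $F(S_X)=S_Y$ is secured, the hypothesis directly says that every $v\in S_X$ belongs to the set $V$ of Lemma~\ref{conv-smooth-prel}, so $V=S_X$ and $A=B_X$; that lemma then delivers the conclusion that $F$ is an isometry. The only delicate step is the normalisation-and-injectivity argument above; everything else is bookkeeping. No new ingredients beyond Theorem~\ref{wing-sc} and Lemma~\ref{conv-smooth-prel} are needed.
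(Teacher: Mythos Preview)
Your argument is correct. The paper itself does not supply a proof of this lemma (it is quoted from \cite[Lemma~2.9]{KZ2017}), so there is nothing to compare against directly; what can be said is that your reduction to Lemma~\ref{conv-smooth-prel} via the intermediate equality $F(S_X)=S_Y$ is clean and self-contained.

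One small point worth making explicit: when you set $z:=F(x)/\|F(x)\|$ you are tacitly assuming $F(x)\neq 0$. This is immediate from Theorem~\ref{wing-sc}(1) ($F(0)=0$) together with injectivity of $F$, since $x\in S_X$ forces $x\neq 0$; but it should be stated. With that addition the contradiction argument goes through exactly as you wrote it, and the final appeal to Lemma~\ref{conv-smooth-prel} with $V=S_X$ and $A=B_X$ finishes the proof.
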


\begin{propos}[{\cite[Theorem 3.1]{Zav}}] \label{stric-conv-image}
 Let $F: B_X \to B_Y$ be a BnE map. If $Y$ is strictly convex, then $F$ is an isometry.
\end{propos}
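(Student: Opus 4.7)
The statement to prove is Proposition~\ref{stric-conv-image}, asserting that when the codomain $Y$ is strictly convex, any BnE map $F : B_X \to B_Y$ is automatically an isometry. My plan is to assemble this from the two ingredients just recalled in the preceding list, namely Theorem~\ref{wing-sc} and Lemma~\ref{preim-of-sph}, so the argument will be very short.

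First, I would invoke the ``moreover'' part of Theorem~\ref{wing-sc}: since $Y$ is strictly convex, $F$ restricts to a bijection $F|_{S_X} \colon S_X \to S_Y$, and for every $x \in S_X$ and every $a \in [-1,1]$ we have $F(ax) = a F(x)$. The bijectivity of $F|_{S_X}$ has the consequence that $F^{-1}(S_Y) = S_X$; this is the identification I need in order to rephrase the positive-homogeneity condition in the form required by Lemma~\ref{preim-of-sph}.

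Next, I would check the hypothesis of Lemma~\ref{preim-of-sph}. That lemma demands that for every $v \in F^{-1}(S_Y)$ and every $t \in [-1,1]$ one has $F(tv) = t F(v)$. Using the identification $F^{-1}(S_Y) = S_X$ from the previous step, this is exactly the positive-homogeneity assertion of Theorem~\ref{wing-sc}(ii). Hence the hypothesis of Lemma~\ref{preim-of-sph} is verified, and the lemma immediately yields that $F$ is an isometry.

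Since the proof is obtained by stitching together two results that have already been cited, no serious obstacle arises; the only subtle point to keep in mind is that the equality $F^{-1}(S_Y) = S_X$ (rather than merely the inclusion $F^{-1}(S_Y) \subset S_X$ given in item (2) of Theorem~\ref{wing-sc}) is exactly what is needed to match the shape of the hypothesis of Lemma~\ref{preim-of-sph}, and this equality is itself a consequence of strict convexity of $Y$ via item (i) of Theorem~\ref{wing-sc}. All the genuine work has been done in those earlier statements; here it only remains to line them up.
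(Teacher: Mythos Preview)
Your argument is correct. The paper does not supply its own proof of Proposition~\ref{stric-conv-image}; it simply cites it as \cite[Theorem 3.1]{Zav} among the auxiliary results collected in Section~\ref{sec:previousbanach}. Your derivation from Theorem~\ref{wing-sc}(i)--(ii) together with Lemma~\ref{preim-of-sph} is exactly the natural way to assemble the result from the ingredients already listed, and it is presumably how the cited reference proceeds as well. One minor remark: you do not actually need the full bijectivity from item~(i); the inclusion $F^{-1}(S_Y)\subset S_X$ from item~(2) of Theorem~\ref{wing-sc} already suffices to feed into item~(ii) and then Lemma~\ref{preim-of-sph}.
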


Let us list some more definitions.

\begin{itemize}

\item An \emph{extreme subset} of a set $B\subset X$ is a subset $C \subset B$ with the property
$$
\forall_{y_1, y_2 \in B} \textit{ } \forall_{\alpha \in (0, 1) } \left(\alpha y_1 + (1-\alpha)y_2 \in C\right) \Longrightarrow (y_1, y_2 \in C).
$$

\item The \emph{generating subspace} of a convex set $C$ is $\spn (C - C)$.

\item The \emph{dimension of a convex set $C$} is the dimension of its generating subspace.

\item For a convex set $B\subset X$ we will say that a point $x \in B$ is \emph{$n$-extreme} if for any $(n+1)$-dimensional subspace $E \subset X$ and any $\varepsilon > 0$ there is an element $e \in S_E$, such that $x + \varepsilon e \notin B$.

\item For $n \in \N$ a point $x$ of the convex set $B$ is called \emph{sharp $n$-extreme in $B$} if it is $n$-extreme and is not $(n-1)$-extreme.

\end{itemize}

Remark, that in the definition we do not demand the convexity of extreme subsets. This is done in order to enjoy the following easy to verify property: the union of any collection of extreme subsets of $B$ is an extreme subset of $B$. Nevertheless, we mostly deal with convex sets and convex extreme subsets. Observe also that being $0$-extreme point and being extreme point of $B$ in the usual sense are equivalent. Every $n$-extreme point of $B$ is also $(n+1)$-extreme point of $B$. Every $n$-dimensional convex extreme subset $C$ of a convex set $B$ consists of $n$-extreme points of $B$ and contains a sharp $n$-extreme point. If $E$ is the generating subspace of the $n$-dimensional convex extreme subset $C \subset B$, then $x \in C$ is a sharp $n$-extreme point of $B$ if and only if $x$ belongs to the relative interior of $C$ in the affine subspace $x + E = C + E$. For a convex set $C\subset X$ with generating subspase $E$ by $\partial C$ we denote the relative boundary of $C$ in $C+E$.

 Evidently, in a normed space for collinear vectors $x, y$ looking in the same direction (codirected vectors) we have
\begin{equation} \label{Introd-quasi-collin}
\|x+y\|=\|x\|+\|y\|.
\end{equation}
In spaces that are not strictly convex the converse statement is not true, which motivates the following definition.
\begin{definition} \label{Def-Introd-quasi-collin}
Elements $x,y \in X$ are said to be {\it quasi-codirected}, if they satisfy (\ref{Introd-quasi-collin}).
\end{definition}
By the triangle inequality, in order to verify \eqref{Introd-quasi-collin} it is sufficient to check $\|x+y\| \ge \|x\|+\|y\|$. The next lemma is well-known, but this is the example of a fact which is much easier to demonstrate than to find out when and who observed it first \smiley
\begin{lem} \label{lemm-Introd-quasi-collin}
If $x,y \in X$ are quasi-codirected, then for every $a,b > 0$ the elements $ax$ and $by$ are quasi-codirected as well.
\end{lem}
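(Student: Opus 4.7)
The plan is to verify the inequality $\|ax + by\| \ge a\|x\| + b\|y\|$, because the reverse inequality is just the standard triangle inequality and, as the authors have already pointed out, this is enough to establish quasi-codirection.

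My preferred route would be a one-line Hahn-Banach argument. Pick a norming functional $f \in S_{X^*}$ satisfying $f(x+y) = \|x+y\|$, which equals $\|x\| + \|y\|$ by hypothesis. Since $f(x) \le \|x\|$, $f(y) \le \|y\|$, and $f(x) + f(y) = \|x\| + \|y\|$, both of these inequalities must be equalities, so $f(x) = \|x\|$ and $f(y) = \|y\|$. Then for arbitrary $a, b > 0$,
\[
a\|x\| + b\|y\| = f(ax + by) \le \|f\|\cdot\|ax + by\| = \|ax + by\|,
\]
which is precisely the missing bound.

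If one prefers to avoid Hahn-Banach, there is an equally short direct route. By positive homogeneity, writing $ax + by = a\bigl(x + (b/a)y\bigr)$ reduces the claim to showing that $\|x + sy\| = \|x\| + s\|y\|$ for every $s > 0$. For $s \in (0,1]$, decompose $x + y = (x + sy) + (1-s)y$ and apply the triangle inequality to obtain $\|x + sy\| \ge \|x+y\| - (1-s)\|y\| = \|x\| + s\|y\|$. For $s > 1$, factor out $s$ to get $\|x+sy\| = s\|s^{-1}x + y\|$, then apply the previous case to $s^{-1}x + y$ using the obvious symmetry $\|x+y\| = \|y+x\|$, i.e.\ that quasi-codirection of $x$ and $y$ is symmetric in its arguments.

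I don't foresee any real obstacle: the only micro-care required in the direct version is the bookkeeping split between $s\le1$ and $s\ge1$, while the functional-analytic proof dispatches all positive pairs $(a,b)$ uniformly. Either way the argument is essentially mechanical, matching the paper's tongue-in-cheek remark about the difficulty of attribution.
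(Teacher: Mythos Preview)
Your proposal is correct. Your direct route is essentially the paper's own argument: the paper assumes without loss of generality that $a \ge b$ and writes $ax + by = a(x+y) - (a-b)y$, then applies the reverse triangle inequality to get $\|ax+by\| \ge a\|x+y\| - (a-b)\|y\| = a\|x\| + b\|y\|$; this is exactly your case $s = b/a \le 1$ before factoring out $a$, and the symmetry you invoke for $s>1$ is precisely the paper's ``WLOG $a \ge b$''.

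Your Hahn--Banach argument, on the other hand, is a genuinely different route. It trades the elementary case split for an appeal to duality: once the norming functional $f$ is shown to attain its norm at both $x$ and $y$, the inequality $\|ax+by\| \ge f(ax+by) = a\|x\|+b\|y\|$ drops out for all $a,b>0$ at once. The gain is uniformity (no split on which of $a,b$ is larger) and a conceptual explanation of \emph{why} quasi-codirection is preserved --- $x$ and $y$ live in a common supporting half-space. The cost is the use of Hahn--Banach, which the paper's one-line triangle-inequality computation avoids entirely.
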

\begin{proof}
By symmetry we may assume $a \ge b$. Then, $\|ax+by\| = \|a(x+y) - (a - b)y\| $ $\ge a\|x+y\| - (a - b)\|y\| = a\|x\|+b\|y\|$.
\end{proof}
Geometrically speaking $x,y \in S_X$ are quasi-codirected, if the whole segment
$$
[x, y] := \{tx + (1-t)y: t \in [0,1]\}
$$
lies on the unit sphere. If $C \subset S_X$ is convex, then every two elements of $C$ are quasi-codirected.

%%%%%%%%%%%%%%%%%%%%%%
%%%% Section Banach %%%%%%%
%%%%%%%%%%%%%%%%%%%%%%

\section{Non-expansive maps and finite-dimensional faces}\label{sec:Banach}

The aim of this section is, in the setting of Section~\ref{sec:previousbanach} and using some similar ideas, to obtain as much as possible information about preimages of finite-dimensional faces of the unit ball. The main result is Theorem~\ref{extsubset_union} that gives a positive answer for the Problem~\ref{prob:ecbanach} when $S_Y$ is the union of all its finite-dimensional polyhedral extreme subsets.

\vspace{2mm}

 Let us start with a very simple observation.

\begin{lem}\label{lem-quasi-codirected}
Let $X$, $Y$ be Banach spaces, $F: B_X \to B_Y$ be a BnE map, and $y_1, y_2 \in S_Y$ be quasi-codirected. Then,
\begin{enumerate}
\item $F^{-1}(y_1)$ is quasi-codirected with $-F^{-1}(-y_2)$, so
\item if $F^{-1}(-y_2) = - F^{-1}(y_2)$, then $F^{-1}(y_1)$ is quasi-codirected with $F^{-1}(y_2)$.
\item In particular if $y_2$ is an extreme point of $B_Y$, then $F^{-1}(y_1)$ is quasi-codirected with $F^{-1}(y_2)$.
\end{enumerate}
\end{lem}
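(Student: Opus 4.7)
My plan is to reduce the whole statement to a single application of non-expansiveness. Set $x_1 := F^{-1}(y_1)$ and $x_2' := -F^{-1}(-y_2)$. Since $y_1, -y_2 \in S_Y$, Theorem~\ref{wing-sc}(2) tells me that $x_1$ and $-x_2' = F^{-1}(-y_2)$ both lie on $S_X$, so $\|x_1\| = \|x_2'\| = 1$. The maps $F$ satisfies $F(x_1) = y_1$ and $F(-x_2') = -y_2$, hence by non-expansiveness
$$
\|x_1 + x_2'\| \;=\; \|x_1 - (-x_2')\| \;\ge\; \|F(x_1) - F(-x_2')\| \;=\; \|y_1 + y_2\|.
$$
Because $y_1$ and $y_2$ are quasi-codirected, the right-hand side equals $\|y_1\|+\|y_2\|=2$, while the left-hand side is at most $\|x_1\|+\|x_2'\|=2$ by the triangle inequality. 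Therefore equality holds throughout, which is precisely the statement that $x_1 = F^{-1}(y_1)$ and $x_2' = -F^{-1}(-y_2)$ are quasi-codirected. This proves (1).

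For (2), the hypothesis $F^{-1}(-y_2) = -F^{-1}(y_2)$ means exactly $x_2' = F^{-1}(y_2)$, so the conclusion is a direct rewriting of (1). For (3), I would invoke Theorem~\ref{wing-sc}(3): if $y_2 \in \ex(B_Y)$, then $F(aF^{-1}(y_2)) = a\,y_2$ for every $a \in [-1,1]$; taking $a=-1$ and using injectivity of $F$ yields $F^{-1}(-y_2) = -F^{-1}(y_2)$, which is the hypothesis of (2).

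I do not anticipate any real obstacle: the whole content is that the inequality defining \emph{quasi-codirected} passes through $F$ in the wrong direction only because $F$ is non-expansive (and this is why one has to flip the sign on $y_2$ before applying $F^{-1}$). Parts (2) and (3) are then routine bookkeeping with the already-established properties of $F$ at extreme points.
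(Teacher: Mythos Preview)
Your argument is correct and is essentially identical to the paper's proof: the paper also derives $\|F^{-1}(y_1)-F^{-1}(-y_2)\|\ge\|y_1+y_2\|=2$ from non-expansiveness, which is exactly your displayed inequality, and then (2) and (3) follow by the same bookkeeping via Theorem~\ref{wing-sc}(3). The only difference is cosmetic: you spell out the appeal to Theorem~\ref{wing-sc}(2) for $\|x_1\|=\|x_2'\|=1$ and the triangle-inequality squeeze, while the paper leaves these implicit.
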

\begin{proof}
\begin{align*}
\left\|F^{-1}(y_1) + \left(-F^{-1}(-y_2)\right) \right\| &= \left\|F^{-1}(y_1) - F^{-1}(-y_2) \right\| \\
&\ge \left\|y_1 - (-y_2) \right\| = \left\|y_1 + y_2 \right\| = 2.
\end{align*}
\end{proof}

The above lemma readily implies the following natural counterpart to Proposition \ref{stric-conv-image}.

\begin{theo}\label{plast-str-conv-dom}
Let $X$, $Y$ be Banach spaces, $X$ be strictly convex and $F: B_X \to B_Y$ be a BnE map. Then $F$ is an isometry.
\end{theo}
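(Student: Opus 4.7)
My plan is to reduce the theorem to the already-proved case of strictly convex codomain (Proposition~\ref{stric-conv-image}) by showing that the hypotheses actually force $Y$ to be strictly convex as well. The key preliminary observation is that in a strictly convex space two unit vectors $u,v$ with $\|u+v\|=2$ must coincide, since otherwise $(u+v)/2\in S_X$ would lie on a non-trivial segment of the sphere; equivalently, distinct points of the unit sphere of a strictly convex space cannot be quasi-codirected.

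The first step is to extract an antipodal symmetry from the trivial instance of Lemma~\ref{lem-quasi-codirected}(1). For any $y\in S_Y$, I would take $y_1=y_2=y$, which are trivially quasi-codirected; Theorem~\ref{wing-sc}(2) ensures that $F^{-1}(y)$ and $F^{-1}(-y)$ both lie in $S_X$. The lemma yields that $F^{-1}(y)$ and $-F^{-1}(-y)$ are quasi-codirected, and since both are unit vectors of the strictly convex space $X$ they must coincide, giving
\[
F^{-1}(-y)=-F^{-1}(y)\qquad\text{for every }y\in S_Y.
\]

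Next I would argue by contradiction that $Y$ is strictly convex. Assume some $y\in S_Y$ is not extreme in $B_Y$, so $y$ is the midpoint of a proper segment whose endpoints $a,b\in S_Y$ lie on the unit sphere (a standard consequence of $\|y\|=\|(a+b)/2\|=1$ with $\|a\|,\|b\|\le 1$); the segment $[a,b]$ is then contained in $S_Y$, and consequently $y$ and $a$ are distinct and quasi-codirected. Applying Lemma~\ref{lem-quasi-codirected}(1) to the pair $(y,a)$ and using strict convexity of $X$ once more gives $F^{-1}(y)=-F^{-1}(-a)$; combining this with the antipodal symmetry just established yields $F^{-1}(y)=F^{-1}(a)$, contradicting the injectivity of $F$.

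Hence $Y$ is strictly convex and Proposition~\ref{stric-conv-image} delivers the conclusion. I expect the only subtle point is to spot that Lemma~\ref{lem-quasi-codirected} applied in its trivial case $y_1=y_2$ is what powers the whole argument; once the antipodal symmetry is in hand, the rest is a short substitution.
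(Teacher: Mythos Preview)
Your proof is correct and follows essentially the same approach as the paper: reduce to Proposition~\ref{stric-conv-image} by showing $Y$ must be strictly convex, using Lemma~\ref{lem-quasi-codirected}(1) together with the fact that in a strictly convex space quasi-codirected unit vectors coincide, and then contradict injectivity of $F$. The only cosmetic difference is that you first isolate the antipodal symmetry $F^{-1}(-y)=-F^{-1}(y)$ as a separate step and then compare two points, whereas the paper runs the lemma over the whole segment $\{y_t\}_{t\in[0,1]}$ at once and observes that all the resulting points $-F^{-1}(-y_t)$ collapse to $F^{-1}(y_1)$; the substance is the same.
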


\begin{proof}
According to Proposition \ref{stric-conv-image} it is sufficient to demonstrate that $Y$ is strictly convex. Assume to the contrary that $S_Y$ contains a non-void segment $[y_0, y_1] := \{ty_1 + (1-t)y_0: t \in [0,1]\}$. Since $X$ is strictly convex, the only element of $S_X$ quasi-codirected with $F^{-1}(y_1)$ is $F^{-1}(y_1)$ itself. But, according to (1) of Lemma \ref{lem-quasi-codirected} all elements $-F^{-1}(-y_t)$, where $y_t := ty_1 + (1-t)y_0$, $t \in [0,1]$, are quasi-codirected with $F^{-1}(y_1)$. This contradiction completes the proof.
\end{proof}

Let $Y$ be a Banach space, $y_1, y_2 \in S_Y$ be quasi-codirected. Denote
\beq \label{set-D1y1y2}
\nonumber D_1(y_1, y_2) &:=& (y_1 + B_Y) \cap (- y_2 + B_Y) \\
&=& \left\{y \in Y \colon \|y_1 - y\| \le 1 \mathrm{\, and \, } \ \|y_2 + y\| \le 1 \right\}\\
\nonumber &=& \bigl\{y \in Y \colon \|y_1 - y\| = \|y_2 + y\| = 1 \bigr\}.
\eeq

% Remark, that in notations of \eqref{set-D1y} we have $D_1(y_1) = D_1(y_1, y_1)$.

Some evident properties of $D_1(y_1, y_2)$ are listed below without proof.
 \begin{lem}\label{lem-set-D1y1y2-ev-prop} Let $Y$ be a Banach space, $y_1, y_2 \in S_Y$ be quasi-codirected. Then
 \begin{itemize}
\item $D_1(y_1, y_2)$ is a convex closed subset of $Y$.
\item $0 \in D_1(y_1, y_2)$.
\item $t D_1(y_1, y_2) \subset D_1(y_1, y_2)$ for every $t \in [0,1]$.
\item $D_1(y_1, y_2) \subset 2 B_Y$, consequently
\item $\frac12 D_1(y_1, y_2) \subset D_1(y_1, y_2) \cap B_Y$.
\end{itemize}
\end{lem}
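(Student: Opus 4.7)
The plan is to verify the third expression for $D_1(y_1,y_2)$ (the one with equalities) first, since this is the only part that genuinely uses the hypothesis that $y_1,y_2$ are quasi-codirected, and then to read off the five bulleted properties in a routine way.

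For the third expression, the containment $\supset$ is immediate from the definition. For $\subset$, take $y$ satisfying $\|y_1-y\|\le 1$ and $\|y_2+y\|\le 1$. The quasi-codirected assumption gives $\|y_1+y_2\|=2$, and then the triangle inequality applied to the decomposition $y_1+y_2=(y_1-y)+(y_2+y)$ yields
$$
2=\|y_1+y_2\|\le\|y_1-y\|+\|y_2+y\|\le 1+1=2,
$$
so both inequalities are in fact equalities. This is the only slightly non-trivial point.

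For the remaining bullets, I would argue as follows. Closedness and convexity of $D_1(y_1,y_2)$ are inherited from the two closed balls $y_1+B_Y$ and $-y_2+B_Y$ whose intersection defines it. For $0\in D_1(y_1,y_2)$, just check $\|y_1-0\|=\|y_2+0\|=1$. The inclusion $tD_1(y_1,y_2)\subset D_1(y_1,y_2)$ for $t\in[0,1]$ follows from convexity together with $0\in D_1(y_1,y_2)$, since $ty=ty+(1-t)\cdot 0$. The bound $D_1(y_1,y_2)\subset 2B_Y$ is the one-line estimate $\|y\|\le\|y-y_1\|+\|y_1\|\le 1+1=2$. Finally, $\tfrac12 D_1(y_1,y_2)\subset D_1(y_1,y_2)\cap B_Y$ is the combination of the previous two: the first factor $\tfrac12 D_1(y_1,y_2)\subset D_1(y_1,y_2)$ is the $t=\tfrac12$ case of the scaling property, and $\tfrac12 D_1(y_1,y_2)\subset B_Y$ follows from $D_1(y_1,y_2)\subset 2B_Y$.

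There is no serious obstacle here; the only step worth isolating is the equality-from-inequality argument in the first paragraph, which is precisely where the quasi-codirectedness enters and which justifies calling $D_1(y_1,y_2)$ a ``face-like'' object rather than merely a lens-shaped intersection of two balls.
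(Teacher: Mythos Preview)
Your argument is correct in every detail. The paper actually omits the proof entirely, introducing the lemma with ``Some evident properties of $D_1(y_1, y_2)$ are listed below without proof,'' so there is no approach to compare against; what you have written is exactly the kind of routine verification the authors had in mind, and your isolation of the equality-from-inequality step (which justifies the third line of the displayed definition \eqref{set-D1y1y2}) is the one point that merits being spelled out.
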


 \begin{lem}\label{lem-set-D1y1y2}
Let $Y$ be a Banach space, $y_1, y_2 \in S_Y$ be quasi-codirected, and $h \in Y$ be such that $y_1 \pm h \in S_Y$. Then
\begin{equation} \label{eq-D1(y1y2)1}
\Bigl\{\frac{1}{2}(y_1- y_2) \pm \frac{1}{2} h\Bigr\} \subset D_1(y_1, y_2).
\end{equation}
In particular, substituting $y_2 = y_1$ we obtain 
$$
\pm \frac{1}{2} h \in D_1(y_1, y_1).
$$
Substituting $h = 0$ we obtain
$$
\frac{1}{2}(y_1 - y_2) \in D_1(y_1, y_2),
$$
which implies that for all $t \in [0, 1/2]$
\begin{equation} \label{eq-D1(y1y2)3}
t (y_1 - y_2) \in D_1(y_1, y_2).
\end{equation}

\end{lem}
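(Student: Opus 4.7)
The plan is to prove the main inclusion \eqref{eq-D1(y1y2)1} by a direct norm computation and then deduce the two corollary substitutions and the segment inclusion \eqref{eq-D1(y1y2)3} as essentially immediate consequences. Writing $u_\pm := \tfrac{1}{2}(y_1 - y_2) \pm \tfrac{1}{2}h$, what I need to check, in view of the first form of the definition \eqref{set-D1y1y2}, is that $\|y_1 - u_\pm\| \le 1$ and $\|y_2 + u_\pm\| \le 1$. First I would carry out the trivial algebraic simplifications: each of these four expressions collapses to $\tfrac{1}{2}\|y_1 + y_2 + \eps h\|$ for an appropriate sign $\eps \in \{-1,+1\}$. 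For instance, $y_1 - u_+ = \tfrac{1}{2}(y_1+y_2) - \tfrac{1}{2}h$ and $y_2 + u_+ = \tfrac{1}{2}(y_1+y_2) + \tfrac{1}{2}h$, with the analogous pair for $u_-$.

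Next I would bound each such expression by grouping $h$ together with $y_1$ and applying the triangle inequality: since the hypothesis gives $\|y_1 \pm h\| = 1$ and $\|y_2\| = 1$, we have
$\|y_1 + y_2 \pm h\| = \|(y_1 \pm h) + y_2\| \le \|y_1 \pm h\| + \|y_2\| = 2$,
so $\tfrac{1}{2}\|y_1 + y_2 \pm h\| \le 1$ in all four cases. This establishes that both $u_+$ and $u_-$ lie in $D_1(y_1, y_2)$, which is \eqref{eq-D1(y1y2)1}.

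The two specializations stated in the lemma are then automatic: setting $y_2 = y_1$ cancels the centered term and leaves $\pm \tfrac{1}{2}h \in D_1(y_1, y_1)$, while setting $h = 0$ gives $\tfrac{1}{2}(y_1 - y_2) \in D_1(y_1, y_2)$. For the last claim \eqref{eq-D1(y1y2)3}, I would invoke the evident properties collected in Lemma~\ref{lem-set-D1y1y2-ev-prop}, namely that $0 \in D_1(y_1, y_2)$ and that $D_1(y_1, y_2)$ is convex; taking convex combinations of $0$ and $\tfrac{1}{2}(y_1 - y_2)$, we see that the entire segment $\{t(y_1 - y_2) \colon t \in [0, 1/2]\}$ lies inside $D_1(y_1, y_2)$.

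I do not anticipate any real obstacle here: the proof is essentially a bookkeeping exercise with the triangle inequality. The quasi-codirectedness hypothesis is not actively exploited in the computation itself; it only underlies the last equality in the definition \eqref{set-D1y1y2}, where it forces the two inequalities $\|y_1 - y\| \le 1$ and $\|y_2 + y\| \le 1$ to become equalities whenever $y \in D_1(y_1, y_2)$, via $\|y_1 + y_2\| = 2$.
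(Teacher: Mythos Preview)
Your proposal is correct and follows essentially the same route as the paper: both reduce the four required inequalities to $\|(y_1 \pm h) + y_2\| \le 2$ via the triangle inequality, and the paper likewise obtains \eqref{eq-D1(y1y2)3} from the evident properties of $D_1(y_1,y_2)$ in Lemma~\ref{lem-set-D1y1y2-ev-prop} (it tacitly uses the star-shapedness $tD_1 \subset D_1$ rather than convexity plus $0 \in D_1$, but these are interchangeable here). Your closing observation that quasi-codirectedness is not used in the inequality computation itself is accurate and worth keeping.
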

\begin{proof}
We have to verify two inequalities:
$$
\left\| \frac{1}{2}(y_1- y_2) \pm \frac{1}{2} h + y_2 \right\| \le 1 \mathrm{\, and \, } \ \left\| \frac{1}{2}(y_1- y_2) \pm \frac{1}{2} h - y_1 \right\| \le 1.
$$
Each of them reduces to the same inequality
$$
\left\| \frac{1}{2}(y_1 + y_2) \pm \frac{1}{2} h \right\| \le 1.
$$
Let us demonstrate this:
$\left\| (y_1 + y_2) \pm h \right\| = \left\| y_2 + (y_1 \pm h) \right\|
 \le \left\| y_2 \right\| + \left\| y_1 \pm h\right\| = 2$.
\end{proof}

 \begin{lem}\label{lem2-set-D1y1y2}
Let $Y$ be a Banach space, $C \subset S_Y$ be a convex extreme subset, and $E$ be the generating subspace of $C$. Then $D_1(y_1, y_2) \subset E$ for every $y_1, y_2 \in C$.
\end{lem}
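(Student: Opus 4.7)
The plan is to take an arbitrary $y \in D_1(y_1, y_2)$ and exhibit it as a difference of two elements of $C$, which by definition of $E$ will place $y$ in $E$. Set
\[
u := y_1 - y, \qquad v := y_2 + y,
\]
so that $u, v \in B_Y$ by the definition of $D_1(y_1, y_2)$, and $y = y_1 - u$. Thus it suffices to establish that $u$ (and symmetrically $v$) lies in $C$, for then $y = y_1 - u \in C - C \subset E$.

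To get $u, v \in C$, I would first exploit the quasi-codirectedness of $y_1$ and $y_2$ (guaranteed because $C$ is convex and contained in $S_Y$, via the remark following Lemma~\ref{lemm-Introd-quasi-collin}). This gives $\|y_1 + y_2\| = 2$. Since $u + v = y_1 + y_2$, the chain
\[
2 = \|u + v\| \le \|u\| + \|v\| \le 1 + 1 = 2
\]
forces $\|u\| = \|v\| = 1$, so both $u$ and $v$ sit on $S_Y$. Moreover,
\[
\tfrac{1}{2}(u + v) = \tfrac{1}{2}(y_1 + y_2) \in C,
\]
because $C$ is convex.

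Now I invoke the extremality of $C$ as a subset of $B_Y$: since $u, v \in B_Y$ and their midpoint lies in $C$, the definition of extreme subset forces $u, v \in C$. Therefore $y = y_1 - u \in C - C \subset \spn(C - C) = E$, completing the argument.

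The proof is essentially bookkeeping once the key move is spotted, namely reinterpreting the two defining inequalities of $D_1(y_1, y_2)$ as the assertion that $u$ and $v$ form a pair in $B_Y$ whose midpoint lies in $C$. I do not expect any genuine obstacle; the only point that requires a little care is verifying that the convex hull of $\{y_1, y_2\}$ really falls inside $C$ (so that the midpoint argument applies), which is immediate from the convexity of $C$.
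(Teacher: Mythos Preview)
Your argument is correct and follows essentially the same route as the paper: take $y \in D_1(y_1,y_2)$, observe that $y_1-y$ and $y_2+y$ lie in $B_Y$ with midpoint $\tfrac12(y_1+y_2)\in C$, apply the definition of extreme subset to conclude both endpoints lie in $C$, and read off $y \in C-C \subset E$. The only difference is that you insert an extra step showing $\|u\|=\|v\|=1$; this is harmless but unnecessary, since the definition of extreme subset of $B_Y$ already applies to any $u,v\in B_Y$, not just those on the sphere.
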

\begin{proof}
Let $y \in D_1(y_1, y_2)$. Then, $y_1 - y, y_2 + y \in B_Y$ and
$$
\frac{1}{2}\bigl((y_1 - y) +( y_2 + y)\bigr) = \frac{1}{2}(y_1 + y_2) \in C.
$$
Consequently, by the definition of extreme subset, $y_2 + y \in C$, so $y = ( y_2 + y) - y_2 \in C-C \subset E$.
\end{proof}

 \begin{lem}\label{lemD1inimage}
Let $X$, $Y$ be Banach spaces, $F: B_X \to B_Y$ be a BnE map, $y_1, y_2 \in S_Y$ be quasi-codirected, $x_1 = F^{-1}(y_1) \in S_X$, $x_2 = -F^{-1}(-y_2) \in S_X$. Then
$$
F\left(D_1(x_1, x_2) \cap B_X\right) \subset D_1(y_1, y_2) \cap B_Y.
$$
In particular, $F\left(\frac12 D_1(x_1, x_2) \right) \subset D_1(y_1, y_2) \cap B_Y$.
\end{lem}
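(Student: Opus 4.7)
The plan is to verify directly the two defining inequalities of $D_1(y_1, y_2)$ for the image $F(x)$ of an arbitrary $x \in D_1(x_1, x_2) \cap B_X$, exploiting only the non-expansiveness of $F$ together with the two ``anchor'' identities $F(x_1) = y_1$ and $F(-x_2) = -y_2$ (the latter being literally the definition of $x_2 = -F^{-1}(-y_2)$).

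Concretely, fix $x \in D_1(x_1, x_2) \cap B_X$. Membership in $B_Y$ is automatic since $F$ maps $B_X$ into $B_Y$. For the inequality $\|y_1 - F(x)\| \le 1$, I would write
$$\|y_1 - F(x)\| = \|F(x_1) - F(x)\| \le \|x_1 - x\| \le 1,$$
the first inequality being non-expansiveness and the second being the first defining inequality of $D_1(x_1, x_2)$. For the inequality $\|y_2 + F(x)\| \le 1$, I would use that $F(-x_2) = -y_2$ (by the definition of $x_2$), so
$$\|y_2 + F(x)\| = \|F(x) - F(-x_2)\| \le \|x - (-x_2)\| = \|x_2 + x\| \le 1,$$
again by non-expansiveness and the second defining inequality of $D_1(x_1, x_2)$. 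This gives the first inclusion.

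For the ``in particular'' statement, I would first verify that $x_1$ and $x_2$ are themselves quasi-codirected, so that the evident properties collected in Lemma~\ref{lem-set-D1y1y2-ev-prop} are available for the pair $(x_1, x_2)$. Since $F$ is a non-expansive bijection, $F^{-1}$ is non-contractive, and so
$$\|x_1 + x_2\| = \|F^{-1}(y_1) - F^{-1}(-y_2)\| \ge \|y_1 + y_2\| = 2,$$
which forces equality and shows that $x_1, x_2$ are quasi-codirected. Then Lemma~\ref{lem-set-D1y1y2-ev-prop} applied to $(x_1, x_2)$ gives $\frac12 D_1(x_1, x_2) \subset D_1(x_1, x_2) \cap B_X$, and the first inclusion finishes the job.

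There is no serious obstacle here; the only point that requires a moment of thought is remembering that $x_2$ was \emph{defined} as $-F^{-1}(-y_2)$ so that $F(-x_2) = -y_2$, which is what lets the second inequality go through without needing $F$ to be odd on the relevant points.
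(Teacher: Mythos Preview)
Your proposal is correct and matches the paper's own proof essentially line for line: the paper also fixes $x \in D_1(x_1,x_2)\cap B_X$, applies non-expansiveness together with $F(x_1)=y_1$ and $F(-x_2)=-y_2$ to obtain the two defining inequalities of $D_1(y_1,y_2)$, and handles the ``in particular'' clause via $\frac12 D_1(x_1,x_2)\subset D_1(x_1,x_2)\cap B_X$ from Lemma~\ref{lem-set-D1y1y2-ev-prop}. The only cosmetic difference is that for the quasi-codirectedness of $x_1,x_2$ the paper invokes Lemma~\ref{lem-quasi-codirected}(1), whereas you reproduce that one-line computation directly.
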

\begin{proof}
According to (1) of Lemma \ref{lem-quasi-codirected}, $x_1$ and $x_2$ are quasi-codirected, so the set $D_1(x_1, x_2) $ is well-defined. Consider arbitrary $x \in D_1(x_1, x_2) \cap B_X$. We have $\|x_1 - x\| \le 1$ and $\|(-x_2) - x\| = \|x_2 + x\| \le 1$, so $\|F(x_1) - F(x)\| \le 1$ and $\|F(-x_2) - F(x)\| \le 1$. In other words,
$\|y_1 - F(x)\| \le 1$ and $\|(-y_2) - F(x)\| = \|y_2 + F(x)\| \le 1$, which means that $ F(x) \in D_1(y_1, y_2)$.
\end{proof}

\begin{lem}\label{n-extpoint}
Let $X$, $Y$ be Banach spaces, $F: B_X \to B_Y$ be a BnE map, $n\in \N$, and $C \subset S_Y$ be an $n$-dimensional convex extreme subset. Then for every $y_1 \in C$ its preimage $x_1 = F^{-1}(y_1) \in S_X$ is an $n$-extreme point of $B_X$.
\end{lem}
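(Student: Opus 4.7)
The plan is to argue by contradiction via Brouwer's invariance of domain (Proposition~\ref{Brower}). Suppose $x_1$ were not $n$-extreme: then there would exist an $(n+1)$-dimensional subspace $E_X\subset X$ and $\varepsilon>0$ with $x_1+\varepsilon B_{E_X}\subset B_X$. The first move is to observe that in this situation $x_1\pm h\in S_X$ for every $h\in \varepsilon B_{E_X}$: indeed $\|x_1\pm h\|\le 1$ and $x_1=\frac12((x_1+h)+(x_1-h))$, so the triangle inequality forces $\|x_1+h\|+\|x_1-h\|=2$, and each norm must equal $1$.

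Next, I would set $y_2:=y_1\in C$ and $x_2:=-F^{-1}(-y_1)\in S_X$. By Lemma~\ref{lem-quasi-codirected}(1) the vectors $x_1$ and $x_2$ are quasi-codirected, so $D_1(x_1,x_2)$ is well defined. Applying Lemma~\ref{lem-set-D1y1y2} inside $X$ to each $h\in \varepsilon B_{E_X}$ (admissible thanks to the previous observation) gives
$$
\tfrac12(x_1-x_2)+\tfrac12 h\in D_1(x_1,x_2).
$$
Halving and using $\tfrac12 D_1(x_1,x_2)\subset D_1(x_1,x_2)\cap B_X$ from Lemma~\ref{lem-set-D1y1y2-ev-prop}, one obtains $p(h):=\tfrac14(x_1-x_2)+\tfrac14 h\in D_1(x_1,x_2)\cap B_X$. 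Lemma~\ref{lemD1inimage} then yields $F(p(h))\in D_1(y_1,y_1)\cap B_Y$, and Lemma~\ref{lem2-set-D1y1y2} applied to $C$ (with both points equal to $y_1$) confines $D_1(y_1,y_1)$ to the generating subspace $E_Y$ of $C$, which is $n$-dimensional.

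At that point the map $\phi\colon \varepsilon B_{E_X}\to E_Y$ given by $\phi(h):=F(p(h))$ is continuous (as $F$ is non-expansive and $p$ is affine) and injective (as both $F$ and $p$ are injective). Its domain is an open subset of the $(n+1)$-dimensional space $E_X$, while its range sits inside the $n$-dimensional space $E_Y$. Identifying $E_X$ with $\R^{n+1}$ and embedding $E_Y$ as a proper linear subspace of $\R^{n+1}$, Proposition~\ref{Brower} would force $\phi(\varepsilon B_{E_X})$ to be open in $\R^{n+1}$, which is impossible.

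The delicate point is the construction in the middle paragraph: one has to choose $x_2$ and a scaling factor so that $D_1(x_1,x_2)$ captures the full $(n+1)$-dimensional freedom supplied by $E_X$, the resulting points still lie inside $B_X$ (so that Lemma~\ref{lemD1inimage} can be invoked), and their $F$-images are trapped inside the $n$-dimensional $E_Y$. Once this is arranged, the invariance-of-domain finale is routine.
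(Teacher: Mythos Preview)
Your argument is correct and is essentially the paper's own proof: assume $x_1$ is not $n$-extreme, use Lemma~\ref{lem-set-D1y1y2} to place a translate of an $(n+1)$-dimensional ball inside $\tfrac12 D_1(x_1,x_2)$, push it via Lemma~\ref{lemD1inimage} into $D_1(y_1,y_1)$, and invoke Lemma~\ref{lem2-set-D1y1y2} for the dimensional contradiction. The only differences are cosmetic: you spell out why $x_1\pm h\in S_X$ and make the final impossibility explicit via Proposition~\ref{Brower} (just restrict to the open $\varepsilon$-ball before applying it, since $B_{E_X}$ is closed in the paper's convention), whereas the paper simply states that an $n$-dimensional subspace cannot contain a homeomorphic copy of an $(n+1)$-ball.
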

\begin{proof}
Denote $x_2 = -F^{-1}(-y_1) \in S_X$. Assume that $x_1$ is not $n$-extreme point of $B_X$. Then, according to the definition, there exist an $(n+1)$-dimensional subspace $E \subset X$ and an $\varepsilon > 0$ such that $x_1 + \varepsilon B_E \subset S_X$. According to Lemma \ref{lem-set-D1y1y2}
$$
\frac{1}{2}(x_1- x_2) + \varepsilon B_E \subset D_1(x_1, x_2).
$$
The above inclusion implies that $\frac12 D_1(x_1, x_2)$ contains an $(n+1)$-dimensional ball. Then Lemma \ref{lemD1inimage} implies that $D_1(y_1, y_1)$ contains a homeomorphic copy of $(n+1)$-dimensional ball, which is impossible by Lemma \ref{lem2-set-D1y1y2}.
\end{proof}
Note, that under conditions of the previous lemma $x$ may be also $m$-extreme point for some $m < n$. Now we are coming to the most important and at the same time most difficult result of the paper.

\begin{theo} \label{theo-preim-extr-set}
Let $X$, $Y$ be Banach spaces, $F: B_X \to B_Y$ be a BnE map, then for every $n\in \N$ the preimage of any $n$-dimensional convex polyhedral extreme subset $C \subset S_Y$ is an $n$-dimensional convex polyhedral extreme subset of $S_X$. Moreover, the equality $- F^{-1}(-C) = F^{-1}(C)$ holds true.
\end{theo}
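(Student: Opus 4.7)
My plan is strong induction on $n$. The base case $n=0$ is immediate from Theorem~\ref{wing-sc}(3): a zero-dimensional convex polyhedral extreme subset of $S_Y$ is just an extreme point $v$, and the theorem guarantees that $F^{-1}(v)$ is extreme in $B_X$ and that $F^{-1}(-v)=-F^{-1}(v)$. For the inductive step, fix an $n$-dimensional convex polyhedral extreme subset $C\subset S_Y$ with vertices $v_1,\ldots,v_k$, and set $x_i:=F^{-1}(v_i)$. Theorem~\ref{wing-sc}(3) then says each $x_i$ is extreme in $B_X$ with $F^{-1}(-v_i)=-x_i$, and since $[v_i,v_j]\subset C\subset S_Y$ the pair $v_i,v_j$ is quasi-codirected; Lemma~\ref{lem-quasi-codirected}(3) promotes this to $[x_i,x_j]\subset S_X$.

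Next, I would apply the induction hypothesis to every proper face $C'$ of $C$: $F^{-1}(C')$ is a polyhedral extreme subset of $S_X$ of the same dimension with vertex set $\{x_i:v_i\in C'\}$, and $-F^{-1}(-C')=F^{-1}(C')$. Set $P:=\conv(x_1,\ldots,x_k)$. Gluing the facet preimages along their common lower-dimensional subfaces (consistent because $F$ is a bijection and intersections of faces go to intersections of their preimages) assembles a topological $(n-1)$-sphere $\Sigma:=\bigcup_{C'\text{ facet of }C}F^{-1}(C')\subset S_X$ combinatorially equivalent to $\partial C$. A careful dimensional analysis, using Lemma~\ref{n-extpoint} (every point of $F^{-1}(C)$ is $n$-extreme in $B_X$) to cap $\dim P$ from above and using the combinatorial incidence data of $\Sigma$ to bound it from below, should give $\dim P=n$ and identify $\Sigma$ with the relative boundary of $P$ in its affine hull.

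It remains to identify $F^{-1}(C)$ with $P$. Combining Lemmas~\ref{lem-set-D1y1y2}, \ref{lem2-set-D1y1y2}, and \ref{lemD1inimage} should confine the preimage of any point of $C$ to the generating $n$-dimensional subspace of $P$, namely $F^{-1}(C)\subset\mathrm{aff}(x_1,\ldots,x_k)$. Inside this $n$-dimensional affine hull, $F^{-1}|_C$ is a continuous injection of the closed topological $n$-ball $C$ that already carries $\partial C$ homeomorphically onto $\partial P=\Sigma$ by induction; Brouwer's invariance of domain (Proposition~\ref{Brower}) together with Proposition~\ref{prop-surject} then forces $F^{-1}(C)=P$, an $n$-dimensional polyhedral extreme subset of $S_X$. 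The symmetry $-F^{-1}(-C)=F^{-1}(C)$ follows immediately, since $-C$ has vertices $-v_i$ with preimages $-x_i$, giving $-F^{-1}(-C)=-\conv(-x_i)=\conv(x_i)=F^{-1}(C)$.

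The main obstacle is the final identification step, namely confining $F^{-1}(C)$ to the $n$-dimensional affine hull of $\{x_i\}$. All the ingredients are available --- the $D_1$-inclusions, the $n$-extremality from Lemma~\ref{n-extpoint}, and topological invariance of domain --- but coordinating them across pairs and triples of vertices of $C$ simultaneously, and then chaining the resulting inclusions through the face lattice of $C$, is where the technical heart of the argument lies.
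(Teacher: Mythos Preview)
Your proposal has the right induction skeleton and correctly identifies the hard step, but the plan for that step does not work as written, and the missing idea is precisely the one the paper uses.

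The $D_1$-lemmas do not confine $F^{-1}(C)$ to any subspace of $X$; they point the other way. Lemma~\ref{lemD1inimage} says $F\bigl(\tfrac12 D_1(x_1,x_2)\bigr)\subset D_1(y_1,y_2)$, and Lemma~\ref{lem2-set-D1y1y2} then puts $D_1(y_1,y_2)$ inside the generating subspace $E$ of $C$ --- in $Y$, not in $X$. So what you get is that $F$ maps certain subsets of $B_X$ into $E$; this says nothing directly about where $F^{-1}(y)$ lives for $y\in C$. Moreover, to form $D_1(x_1,x_2)$ one needs $x_2=-F^{-1}(-y_2)$, not $F^{-1}(y_2)$; for $y_2\in\partial C$ these coincide by induction, but for interior $y_2$ they need not. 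This is why the paper introduces the auxiliary BnE map $G(x)=-F(-x)$: one first traps $G^{-1}(C\setminus\partial C)$ (not $F^{-1}$) in a cone $\{t x+(1-t)u:t\in[0,1],\,x\in A\}$ over $A=F^{-1}(\partial C)$ with apex $u\in F^{-1}(C\setminus\partial C)$, via a topological argument (the image of $\widetilde A=[0,\tfrac14](u-A)$ under $F$ is an $n$-ball in $E$ with $0$ interior, so the curve $F([0,\tfrac14](u-\widetilde x_2))$ must meet it). Interchanging the roles of $F$ and $G$ and invoking Proposition~\ref{prop-surject} then gives $F^{-1}(C)=G^{-1}(C)$, which is exactly the ``moreover'' clause; only after this does one know that preimages of interior points of $C$ sit in the right place, and the convexity/affine-hull statement is deduced at the very end from inclusion \eqref{*}. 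Your outline treats $-F^{-1}(-C)=F^{-1}(C)$ as a trivial afterthought, whereas in fact it is the linchpin that makes the rest possible.

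There are secondary issues as well. Lemma~\ref{n-extpoint} tells you each $x_i$ is $n$-extreme, but $n$-extremality of vertices does not cap $\dim\conv(x_1,\dots,x_k)$ (e.g.\ the vertices of a face of $B_{\ell_\infty^m}$ are $0$-extreme yet span a high-dimensional simplex), so your upper bound on $\dim P$ is unjustified. And before you can invoke Brouwer you need $F^{-1}|_C$ continuous; the paper obtains this only after first placing $F^{-1}(C)$ inside a finite-dimensional subspace, whence it is compact and $F$ restricts to a homeomorphism. In short, the heart of the argument is not ``chaining inclusions through the face lattice'' but the $F$/$G$ symmetry trick, and without it your programme stalls at the step you yourself flag as the obstacle.
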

\begin{proof}
We will use the induction in $n$. The initial case of $n = 0$ (i.e., of extreme points) is covered by the assertion (3) of Theorem \ref{wing-sc}. Let us assume that the theorem is demonstrated for extreme subsets of dimension smaller than $n$, and let us demonstrate it for a given $n$-dimensional polyhedral extreme subset $C \subset S_Y$. Denote $E$ the generating subspace of $C$, $\dim E = n$. The boundary $\partial C$ of polyhedron $C$ consists of finite union of its convex $(n-1)$-dimensional polyhedral extreme subsets, so, by the inductive hypothesis, $A:= F^{-1}(\partial C)$ also consists of finite union of some convex $(n-1)$-dimensional polyhedral extreme subsets of $S_X$. Consequently, $A$ is an extreme subset of $B_X$. Also, $A$ is compact, and $F|_A$ performs a homeomorphism between $A$ and $F(A) = \partial C$. Let $y_1 \in C \setminus \partial C$ be an arbitrary point. Denote $x_1 = F^{-1}(y_1)$. Since $y_1$ is quasi-codirected with every point $y_2 \in \partial C$, $x_1$ is quasi-codirected with the corresponding $x_2 = -F^{-1}(-y_2) \in S_X$. By the inclusion \eqref{eq-D1(y1y2)3} and Lemmas \ref{lemD1inimage} and \ref{lem2-set-D1y1y2}
$$
F\left(t (x_1 - x_2)\right) \in F\left(\frac12 D_1(x_1, x_2) \right) \subset D_1(y_1, y_2) \subset E
$$
for all $t \in \left[0, \frac{1}{4}\right]$.
By the inductive hypothesis, when $y_2$ runs through $\partial C$ the corresponding $x_2$ runs through the whole $A$. So, denoting $\widetilde A = \left[0, \frac{1}{4}\right] (x_1 - A) = \left\{t (x_1 - x_2) \colon t \in \left[0, \frac{1}{4}\right], x_2 \in A \right\}$ we obtain
\begin{equation} \label{eq-tildeA}
F\bigl(\widetilde A\bigr) \subset E.
\end{equation}
Let us demonstrate that the segments $\left(0, \frac{1}{4}\right] (x_1 - x_2)$ with different $x_2 \in A$ are pairwise disjoint. We will argue by contradiction. Let two segments of the form $\left(0, \frac{1}{4}\right] (x_1 - \widehat{x}_2)$, $\left(0, \frac{1}{4}\right] (x_1 - \widetilde{x}_2)$ with $\widehat{x}_2,\widetilde{x}_2 \in A$, $\widehat{x}_2 \neq \widetilde{x}_2$ intersect at some point $y$. Then the corresponding closed segments $\left[0, \frac{1}{4}\right] (x_1 - \widehat{x}_2)$, $\left[0, \frac{1}{4}\right] (x_1 - \widetilde{x}_2)$ intersect in two points ($0$ and $y$), so either they coincide or one segment contains the other one. That is, $(x_1 - \widehat{x}_2)$ and $(x_1 - \widetilde{x}_2)$ are codirected. There are two cases:$(x_1 - \widehat{x}_2) = \lambda(x_1 - \widetilde{x}_2)$ or $(x_1 - \widetilde{x}_2) = \lambda(x_1 - \widehat{x}_2)$ with some $0<\lambda<1$. We will discuss the first one, the second one is analogous. We get $\widehat{x}_2=\lambda\widetilde{x}_2+(1-\lambda)x_1$, so these three points are on the same segment and $\widehat{x}_2$ lies between $x_1$ and $\widetilde{x}_2$. Since $A$ is extreme subset, we get $x_1\in A$, which contradicts the fact $y_1\notin \partial C$.

The set $(x_1 - A)$ is homeomorphic to the unit sphere of $\R^n$.
Let us show, that $\widetilde A$ is homeomorphic to the unit ball of $\R^n$, with 0 mapped to 0. Let $S^n$ and $B^n$ denote the unit sphere and the unit ball of $\R^n$ respectively, and $h \colon S^n \to (x_1 - A)$ be a homeomorphism.
 One may define the mapping $H\colon B^n \to \widetilde A$ as
 \begin{equation*}
H(x)=
\begin{cases}
 0, & \mbox{when } x=0 \\
 \frac{1}{4}\|x\| h\left(\frac{x}{\|x\|}\right), & \mbox{when } x\in B^n\backslash\{0\}.
\end{cases}
 \end{equation*}
 Obviously, this mapping is bijective and continuous at 0. We are going to show that $H$ is continuous at all points.
 Let us consider some sequence $\{x_n\}_{n=1}^{\infty}$ in $B^n$ converging to an $x \in B^n \setminus \{0\}$, that is
 $$
 \lim_{n\to\infty}x_n = x \neq 0.
 $$
 Then
 \begin{align*}
 \lim_{n\to\infty}H(x_n)&=\lim_{n\to\infty}\frac{1}{4}\|x_n\|h\left(\frac{{x}_n}{\|x_n\|}\right)=\frac{1}{4}\lim_{n\to\infty}\|x_n\| \lim_{n\to\infty}h\left(\frac{{x}_n}{\|x_n\|}\right)\\
&=\frac{1}{4}\|x\| h\left(\lim_{n\to\infty}\frac{x_n}{\|x_n\|}\right)=\frac{1}{4}\|x\|h\left(\frac{{x}}{\|x\|}\right) = H(x).
\end{align*}
So, $H$ is a bijective continuous map from compact $B^n$ to Hausdorf space, thus $H$ is a homeomorphism.

 Consequently, $F\bigl(\widetilde A\bigr) \subset E$ is homeomorphic to the unit ball of $\R^n$, with 0 being a relative (in $E$) interior point of $F\bigl(\widetilde A\bigr)$.

Consider now any point $\widetilde y_2 \in C \setminus \partial C$, $\widetilde y_2 \neq y_1$, such that the corresponding $\widetilde x_2 = -F^{-1}\bigl(-\widetilde y_2\bigr)$ is not equal to $x_1$. By the same reason as before, the segment $F\left(\left[0, \frac{1}{4}\right](x_1 - \widetilde x_2)\right) \subset D_1(y_1, \widetilde y_2) \subset E$. The set $F\left(\left[0, \frac{1}{4}\right] (x_1 - \widetilde x_2)\right)$ is a continuous curve in $E$ connecting $F(\frac{1}{4}(x_1 - \widetilde x_2))$ with 0, which is an interior point of $F\bigl(\widetilde A\bigr)$. So there is a $t_0 \in \left(0, \frac{1}{4}\right]$ such that $F\bigl(t_0 (x_1 - \widetilde x_2)\bigr) \in F\bigl(\widetilde A\bigr)$, that is $t_0 (x_1 - \widetilde x_2) \in \widetilde A$. This means that for some $t_1 \in \left(0, \frac{1}{4}\right]$ and some $x_2 \in A$ we have $t_0 (x_1 - \widetilde x_2) = t_1 (x_1 - x_2)$. In other words, there is an $\alpha > 0$ such that
\begin{equation} \label{eq-tot1}
x_1 - \widetilde x_2 = \alpha (x_1 - x_2).
\end{equation}
 Let us demonstrate that $\alpha < 1$. Indeed, if $\alpha \ge 1$, the above formula would give the representation
 $$
 x_2 = \left(1 - \frac{1}{\alpha}\right)x_1 + \frac{1}{\alpha}\widetilde x_2
 $$
 of $x_2 \in A$ as a convex combination of $x_1, \widetilde x_2 \in S_X \setminus A$, which contradicts the fact that $A$ is extreme in $S_X$.

 Since $\alpha < 1$, the formula \eqref{eq-tot1} gives the representation
 $$
 \widetilde x_2 = (1 - \alpha) x_1 + \alpha x_2
 $$
 of $\widetilde x_2$ as a convex combination of $x_1$ and some $x_2 \in A$.

 If we consider the BnE mapping $G: B_X \to B_Y$ defined as $G(x) = -F(-x)$, all the above reasoning is applicable for $G$ as well, because by the inductive hypothesis $G^{-1}(\partial C) = F^{-1}(\partial C) = A$. Since $\widetilde x_2 = G^{-1}\bigl(\widetilde y_2\bigr)$ and $x_1 = - G^{-1}(- y_1)$ the roles of these elements for $G$ interchange, and we deduce that also $x_1$ is a convex combination of $\widetilde x_2$ and some $x_3 \in A$. So, we obtain the following properties of sets $F^{-1}(C \setminus \partial C)$ and $G^{-1}(C \setminus \partial C)$:

\smallskip
\textbf{Properties.}

\noindent (i) For every $u \in F^{-1}(C \setminus \partial C)$
$$
G^{-1}(C \setminus \partial C) \subset \left\{t x + (1-t)u \colon t \in \left[0, 1\right], x \in A \right\}.
$$

\noindent (ii)
For every $v \in G^{-1}(C \setminus \partial C)$
$$
F^{-1}(C \setminus \partial C) \subset \left\{t x + (1-t)v \colon t \in \left[0, 1\right], x \in A \right\}.
$$

\noindent (iii) For every $u \in F^{-1}(C \setminus \partial C)$ and every $v \in G^{-1}(C \setminus \partial C)$, $u \neq v$ there are (unique) elements $w, z \in A$ such that $[u, v] \subset [w, z]$.
\smallskip

Properties (i) and (ii) imply that $F^{-1}(C)$ and $G^{-1}(C)$ lie in some finite-dimensional subspace of $X$. Since both these sets are bounded and closed, they are compacts. Continuous mappings $F$ and $G$ map corresponding compacts $F^{-1}(C)$ and $G^{-1}(C)$ to $C$ bijectively, so both $F^{-1}(C)$ and $G^{-1}(C)$ are homeomorphic to $C$, i.e. homeomorphic to the unit ball of $\R^n$. Since the set $\left\{t x + (1-t)u \colon t \in \left[0, 1\right], x \in A \right\}$ for a fixed $u$ is also homeomorphic to the unit ball of $\R^n$ and $A$ corresponds to the unit sphere and belongs to both $\left\{t x + (1-t)u \colon t \in \left[0, 1\right], x \in A \right\}$ and $G^{-1}(C)$, the inclusion (i) and Proposition \ref{prop-surject} imply that

\smallskip
\noindent (i)' for every $u \in F^{-1}(C \setminus \partial C)$
$$
G^{-1}(C) = \left\{t x + (1-t)u \colon t \in \left[0, 1\right], x \in A \right\},
$$
and by the same reason

\noindent (ii)'
For every $v \in G^{-1}(C \setminus \partial C)$
$$
F^{-1}(C) = \left\{t x + (1-t)v \colon t \in \left[0, 1\right], x \in A \right\}.
$$

In particular, from (i)' it follows that every $u \in F^{-1}(C \setminus \partial C)$ belongs to $G^{-1}(C)$, so $F^{-1}(C) \subset G^{-1}(C)$, and (ii)' implies the inverse inclusion $G^{-1}(C) \subset F^{-1}(C)$, so
$$
G^{-1}(C) = F^{-1}(C).
$$

Coming back to the already used inclusion \eqref{eq-D1(y1y2)3} and Lemmas \ref{lemD1inimage} and \ref{lem2-set-D1y1y2} we obtain that for all $x_1, x_2 \in F^{-1}(C)$
$$
F\left(\frac{1}{4}(x_1 - x_2)\right) \in E,
$$
in other words
\begin{equation}\label{*}
 F\left(\frac{1}{4}( F^{-1}(C) - F^{-1}(C))\right) \subset E.
\end{equation}

Recall, that by the inductive hypothesis, $A= F^{-1}(\partial C)$ consists of finite union of some convex $(n-1)$-dimensional polyhedral extreme subsets $\widetilde W_i$, $i = 1, \ldots, N$ which are preimages of corresponding parts of $\partial C$. Let us fix some $v\in F^{-1}(C \setminus \partial C)$.
Denote
$$
W_i = \left\{t x + (1-t)v \colon t \in \left[0, 1\right], x \in \widetilde W_i \right\}.
$$
These $W_i$ are $n$-dimensional convex polyhedrons, and, according to (ii)',
$$
 F^{-1}(C) = \bigcup_{i=1}^N W_i.
$$
We state that all polyhedrons $W_i$ (and also their union $ F^{-1}(C)$) are situated in one and the same $n$-dimensional affine subspace $\widetilde E$.

To this end, consider the generating subspaces $Z_i = \spn (W_i - W_i)$ of $W_i$ and let us demonstrate that all of $Z_i$ are equal one to another, i.e. all of them are equal to some $n$-dimensional linear subspace $Z$. Then $\widetilde E = v + Z$ will be the $n$-dimensional affine subspace $\widetilde E$ we are looking for.

Let us argue ``ad absurdum''. Assume that $Z_i \neq Z_j$ for some $i \neq j$. Then $Z_i + Z_j$ has dimension strictly greater than $n$, and
$$
\dim(W_i-W_j) = \dim(\spn((W_i-W_j) - (W_i-W_j))) = \dim(Z_i + Z_j) > n.
$$
Taking into account that $W_i-W_j \subset (F^{-1}(C) - F^{-1}(C))$ the dimension of $F^{-1}(C) - F^{-1}(C)$ is strictly greater than $n$, which makes the inclusion \eqref{*} impossible.

It remains to demonstrate that $ F^{-1}(C)$ is convex and is an extreme subset. For the convexity let us show that $ F^{-1}(C) = B_X \cap \widetilde E$.
We have already known, that $ F^{-1}(C)\subset B_X \cap \widetilde E$. Let us show the inverse inclusion. Again we will argue by contradiction. Suppose there is a point $z\in (B_X \cap \widetilde E)\setminus F^{-1}(C)$. We may fix some $v\in F^{-1}(C \setminus \partial C)$ and consider the segment $[z,v]$. As we already remarked, $F^{-1}(C)$ is homeomorphic to $C$ and hence to $B^n$, that is, $v$ lies in the relative interior of $F^{-1}(C)$ in $\widetilde E$. So, the segment $[z,v]$ must intersect $A = F^{-1}(\partial C)$ in some point. In other words, there is $\lambda\in(0,1)$ such that $\lambda z+ (1-\lambda)v\in A$, which contradicts the fact, that $A$ is an extreme subset in $B_X$.

\end{proof}

\begin{theo}\label{conv(0,C)}
 Let $X$, $Y$ be Banach spaces, $F: B_X \to B_Y$ be a BnE map, then for every $n$-dimensional convex polyhedral extreme subset $C \subset S_Y$ the following equality holds true:
 $F(\conv(0,F^{-1}(C)))= \conv(0,C)$.
\end{theo}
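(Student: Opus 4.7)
The plan is to argue by induction on the dimension $n$. The base case $n=0$ is immediate: if $C = \{y\}$ is an extreme point of $B_Y$, then by Theorem~\ref{wing-sc}(3) its unique preimage $x = F^{-1}(y)$ is extreme in $B_X$ and $F(tx) = ty$ for all $t \in [-1,1]$, so $F(\conv(0,\{x\})) = [0,y] = \conv(0,C)$.

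For the inductive step I fix an $n$-dimensional polyhedral extreme subset $C \subset S_Y$ and set $C' := F^{-1}(C)$, which by Theorem~\ref{theo-preim-extr-set} is an $n$-dimensional polyhedral extreme subset of $S_X$. If $D_1,\ldots,D_N$ are the $(n-1)$-dimensional faces of $C$, then $\partial C = \bigcup_i D_i$ and, applying Theorem~\ref{theo-preim-extr-set} to each $D_i$, also $\partial C' = \bigcup_i F^{-1}(D_i)$. The inductive hypothesis yields $F(\conv(0,F^{-1}(D_i))) = \conv(0,D_i)$ for every $i$, and combining with $F(C') = C$ shows that $F$ sends $\partial \conv(0,C') = C' \cup \bigcup_{i} \conv(0, F^{-1}(D_i))$ bijectively onto $\partial \conv(0,C) = C \cup \bigcup_i \conv(0,D_i)$, where both boundaries are taken relative to the $(n+1)$-dimensional linear spans.

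The hard part will be to verify the inclusion $F(\conv(0,C')) \subset \spn(C)$, where $\spn(C)$ denotes the $(n+1)$-dimensional subspace of $Y$ containing $\conv(0,C)$. For $x \in \partial C'$ or $t \in \{0,1\}$ the inclusion is a consequence of what was obtained in the previous paragraph, but for $x$ in the relative interior of $C'$ and $t \in (0,1)$ the point $F(tx)$ is not pinned down by norm identities alone. My plan here is to exploit the relation $F(\frac{1}{4}(C'-C')) \subset E$ (with $E = \spn(C-C)$) established inside the proof of Theorem~\ref{theo-preim-extr-set}, together with the injectivity and continuity of $F$ and Brouwer's invariance of domain (Proposition~\ref{Brower}), propagating the finite-dimensional image information from $\partial \conv(0,C')$ and from the $n$-dimensional set $\frac{1}{4}(C'-C')$ to the full cone.

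Once this inclusion is in hand, the rest is topological: $F|_{\conv(0,C')}$ is a continuous injection from a compact $(n+1)$-dimensional body into the $(n+1)$-dimensional space $\spn(C)$, hence a homeomorphism onto its image. By Proposition~\ref{Brower} the image of the relative interior of $\conv(0,C')$ is open in $\spn(C)$, and, being connected, bounded, and disjoint from $\partial \conv(0,C)$, it must lie in the bounded component of $\spn(C) \setminus \partial \conv(0,C)$, namely the relative interior of $\conv(0,C)$. Adjoining the image of the boundary yields $F(\conv(0,C')) = \conv(0,C)$, closing the induction.
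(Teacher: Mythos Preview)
Your overall architecture --- induction on $n$, handling $\partial\conv(0,C')$ via the inductive hypothesis, and finishing with an invariance-of-domain argument --- matches the paper's. The genuine gap is in what you yourself flag as the ``hard part'': you never actually prove $F(\conv(0,C')) \subset \spn(C)$, and the tools you list do not assemble into a proof. First, the set $\tfrac14(C'-C')$ is \emph{not} contained in $\conv(0,C')$: it is a symmetric neighborhood of $0$ in the $n$-dimensional subspace $\spn(C'-C')$, whereas the cone $\conv(0,C')$ lies entirely on one side of that subspace, so the relation $F\bigl(\tfrac14(C'-C')\bigr) \subset E$ tells you nothing about interior points of the cone. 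Second, knowing that the boundary of an $(n+1)$-dimensional body maps into an $(n+1)$-dimensional subspace gives no topological control over the interior: a continuous injection can bulge the interior out of that subspace. Invariance of domain only applies \emph{after} the target is known to be $(n+1)$-dimensional, not before; ``propagating'' is not a mechanism.

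The paper bypasses this entirely with a short direct argument exploiting the extremality of $C$, and in fact obtains the stronger inclusion $F(\conv(0,C')) \subset \conv(0,C)$ rather than merely $\subset \spn(C)$. For $x \in F^{-1}(C)$ and $\alpha \in (0,1)$, non-expansiveness and $\|F(x)\|=1$ give
\[
1 = \|F(x)\| \le \|F(\alpha x)\| + \|F(x)-F(\alpha x)\| \le \alpha + (1-\alpha) = 1,
\]
so $F(x)$ is the convex combination
\[
\|F(\alpha x)\|\,\frac{F(\alpha x)}{\|F(\alpha x)\|} + \|F(x)-F(\alpha x)\|\,\frac{F(x)-F(\alpha x)}{\|F(x)-F(\alpha x)\|}
\]
of two unit vectors. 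Since $F(x) \in C$ and $C$ is an extreme subset of $B_Y$, both unit vectors lie in $C$; in particular $F(\alpha x)$ is a subunit multiple of a point of $C$, hence $F(\alpha x) \in \conv(0,C)$. With this inclusion established, your topological finish (equivalently, Proposition~\ref{prop-surject}) goes through.
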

\begin{proof}
We will carry out the proof by induction in $n$. For $n = 0$ (i.e., when C is extreme point) the required equality may be obtained from the assertion (3) of Theorem \ref{wing-sc}. Suppose our theorem is proved for all extreme subsets of dimension smaller than $n$, and let us show the same for a given $n$-dimensional polyhedral extreme subset $C \subset S_Y$. Consider $x\in F^{-1}(C\setminus \partial C)$ and $\alpha \in (0,1)$.
Since $F$ is non-expansive we have
\begin{equation}\label{**}
\|F(\alpha x)\|\leq \|\alpha x\|, \textrm{ and } \|F(x)-F(\alpha x)\|\leq \|x-\alpha x\|.
\end{equation}
Also
\begin{align}\label{eq:3star}
\nonumber 1=\|F(x)\| &\leq \|F(\alpha x)\|+\|F(x)-F(\alpha x)\| \\
&\leq \|\alpha x\|+\|x-\alpha x\|=1.
\end{align}
That is why
$$
\|F(\alpha x)\|+\|F(x)-F(\alpha x)\| = 1.
$$
So one may write $F(x)$ as a convex combination
\begin{align*}
 F(x) = \|F(\alpha x)\|\frac{F(\alpha x)}{\|F(\alpha x)\|}+\|F(x)-F(\alpha x)\|\frac{F(x)-F(\alpha x)}{\|F(x)-F(\alpha x)\|}.
\end{align*}
Since $F(x)\in C$ and $C$ is extreme subset in $B_X$ we get $\frac{F(\alpha x)}{\|F(\alpha x)\|}\in C$ and $\frac{F(x)-F(\alpha x)}{\|F(x)-F(\alpha x)\|}\in C$. So, $F(\alpha x) = \|F(\alpha x)\|\frac{F(\alpha x)}{\|F(\alpha x)\|}\in \conv(\frac{F(\alpha x)}{\|F(\alpha x)\|},0) \subset \conv(0,C)$ and thus $F(\conv(0,F^{-1}(C)))\subset \conv(0,C)$.
 By the inductive hypothesis $F(\conv(0,A))= \conv(0,\partial C)$ and $\partial \conv(0,C) \subset F(\conv(0,F^{-1}(C)))$. Besides, $\conv(0,F^{-1}(C))$ is homeomorphic to $B^{n+1}$ and $\partial \conv(0,C)$ is homeomorphic to $S^{n+1}$. In this way Proposition \ref{prop-surject} implies the statement of the theorem.
\end{proof}

\begin{lem}\label{normeq}
Let $X$, $Y$ be Banach spaces, $F: B_X \to B_Y$ be a BnE map, then
$\|F(\alpha x)\| =\|\alpha x\|=\alpha$ for all $x\in F^{-1}(S_Y)$, $\alpha \in [0,1]$.
\end{lem}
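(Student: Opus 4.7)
The plan is to extract directly the chain of inequalities that already appears in the proof of Theorem~\ref{conv(0,C)}, namely \eqref{**} and \eqref{eq:3star}, and observe that this computation requires no hypothesis beyond $F(x)\in S_Y$.

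First I would note that $x\in F^{-1}(S_Y)$ means $F(x)\in S_Y$, so by assertion (2) of Theorem~\ref{wing-sc} we have $x\in S_X$, hence $\|\alpha x\|=\alpha$ and the second equality in the statement is trivial. For the first equality, I would combine three ingredients: non-expansiveness applied to the pair $(\alpha x, 0)$ together with $F(0)=0$ (assertion (1) of Theorem~\ref{wing-sc}), which gives $\|F(\alpha x)\|\le\alpha$; non-expansiveness applied to the pair $(x,\alpha x)$, which gives $\|F(x)-F(\alpha x)\|\le 1-\alpha$; and the triangle inequality
\[
1=\|F(x)\|\le \|F(\alpha x)\|+\|F(x)-F(\alpha x)\|\le \alpha+(1-\alpha)=1.
\]
Since the two extremes coincide, every intermediate inequality must be an equality, and in particular $\|F(\alpha x)\|=\alpha$, as required.

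There is no real obstacle here; the lemma is essentially a bookkeeping statement isolating the computation carried out inside the proof of Theorem~\ref{conv(0,C)}. The only conceptual point worth flagging is that non-expansiveness at the pair $(\alpha x,0)$ is what forces $\|F(\alpha x)\|\le \alpha$, and without the assumption $F(x)\in S_Y$ one could not close the sandwich that upgrades the two non-expansive inequalities to equalities.
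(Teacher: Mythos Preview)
Your proof is correct and is essentially identical to the paper's own argument: both extract the chain \eqref{**}--\eqref{eq:3star} from the proof of Theorem~\ref{conv(0,C)} and observe that equality throughout forces $\|F(\alpha x)\|=\alpha$. The only cosmetic addition is that you make explicit the appeal to $F(0)=0$ and to $x\in S_X$ via Theorem~\ref{wing-sc}, which the paper leaves implicit.
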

\begin{proof}
Since $F$ is non-expansive, we may use inequalities \eqref{**} and \eqref{eq:3star}.
The inequality \eqref{eq:3star} implies
$$
\|F(\alpha x)\|+\|F(x)-F(\alpha x)\| =\|\alpha x\|+\|x-\alpha x\|,
$$
and application of \eqref{**} concludes the proof.
\end{proof}

\begin{theo}\label{extsubset_union}
 Let $X$, $Y$ be Banach spaces, $F: B_X \to B_Y$ be a BnE map and $S_Y$ be the union of all its finite-dimensional polyhedral extreme subsets. Then $F$ is an isometry.
\end{theo}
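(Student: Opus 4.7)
My plan is to reduce the theorem to the already proved Lemma~\ref{preim-of-sph}: it will suffice to show that $F(tv) = tF(v)$ for every $v \in F^{-1}(S_Y)$ and every $t \in [-1,1]$. A preliminary observation is that under the standing hypothesis, $F^{-1}(S_Y) = S_X$. The inclusion $F^{-1}(S_Y) \subset S_X$ is Theorem~\ref{wing-sc}(2); conversely, pick $v \in S_X$, so $F(v) \ne 0$ and $F(v)/\|F(v)\|$ lies in some finite-dimensional polyhedral extreme subset $C \subset S_Y$ by hypothesis, whence $F(v) \in \conv(0, C)$. Theorem~\ref{conv(0,C)} places its unique preimage in $\conv(0, F^{-1}(C))$, so $v = sw$ for some $w \in F^{-1}(C)$ and $s \in [0,1]$; Lemma~\ref{normeq} forces $s = \|v\| = 1$, and $v = w \in F^{-1}(C) \subset F^{-1}(S_Y)$. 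It therefore suffices to verify the claim $F(tv) = tF(v)$ for every finite-dimensional polyhedral extreme subset $C \subset S_Y$, every $v \in F^{-1}(C)$, and every $t \in [-1,1]$.

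I would establish this by induction on $n = \dim C$. The base $n = 0$ is Theorem~\ref{wing-sc}(3). For the inductive step, if $F(v) \in \partial C$, then $F(v)$ belongs to a proper face of $C$ that is itself a polyhedral extreme subset of $S_Y$ of smaller dimension, and the inductive hypothesis finishes the job. The substantive case is $F(v) \in C \setminus \partial C$. The inductive hypothesis applied to every proper face $C_i$ of $C$ supplies $F(su) = sF(u)$ for all $u \in F^{-1}(\partial C)$ and $s \in [-1,1]$, so Lemma~\ref{conv-smooth-prel} tells us that $F$ restricts to a bijective isometry on the cone $A := \{su \dopu u \in F^{-1}(\partial C),\ s \in [-1,1]\}$. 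For each $(n-1)$-dimensional face $C_i$ of $C$ the ``double cone'' $A_i := \conv\bigl(F^{-1}(C_i) \cup (-F^{-1}(C_i))\bigr)$ is a convex set of full dimension $n$ inside its linear span $Z_i$, and Proposition~\ref{Mankiewicz} extends $F|_{A_i}$ to a linear isometry $\widetilde F_i \colon Z_i \to Y$ — linear rather than merely affine because $F(0) = 0$.

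These extensions $\widetilde F_i$ agree on pairwise overlaps $Z_i \cap Z_j$, since both coincide with $F$ on the lower-dimensional intersection-cone where the inductive hypothesis applies. They therefore patch to a single linear map $\widetilde F \colon \widehat E \to Y$ on the $(n+1)$-dimensional linear subspace $\widehat E := \spn(F^{-1}(C))$ identified in the proof of Theorem~\ref{theo-preim-extr-set}, with $\widetilde F(v'_j) = F(v'_j)$ at every vertex $v'_j$ of $F^{-1}(C)$. By linearity, $\widetilde F$ sends $\conv(0, F^{-1}(C))$ bijectively onto $\conv(0, C)$ and coincides with $F$ on the ``side boundary'' $\conv(0, F^{-1}(\partial C))$. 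Then $\phi := F \circ \widetilde F^{-1}$ is a continuous self-bijection of the compact polytope $\conv(0, C)$ that fixes $\conv(0, \partial C)$ pointwise, and since this fixed set affinely spans the ambient $(n+1)$-dimensional subspace, a domain-invariance argument based on Proposition~\ref{Brower} together with the bijectivity provided by Theorem~\ref{conv(0,C)} forces $\phi = \mathrm{id}$. Hence $F = \widetilde F$ on $\conv(0, F^{-1}(C))$, yielding $F(tv) = t\widetilde F(v) = tF(v)$ for $t \in [0,1]$. For $t \in [-1,0]$ I would repeat the construction with the extreme subset $-C$ (whose preimage is $-F^{-1}(C)$ by Theorem~\ref{theo-preim-extr-set}); the resulting linear extension $\widetilde G$ coincides with $\widetilde F$ on every vertex $v'_j$ by Theorem~\ref{wing-sc}(3), so $\widetilde G = \widetilde F$ on $\widehat E$, and $F(-sv) = \widetilde G(-sv) = -s\widetilde F(v) = -sF(v)$ for $s \in [0,1]$. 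Lemma~\ref{preim-of-sph} then yields that $F$ is an isometry.

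The main obstacle is the inductive step — in particular, the patching of the face-wise Mankiewicz extensions $\widetilde F_i$ into a single well-defined linear $\widetilde F$ on $\widehat E$, and the subsequent rigidity argument forcing this linear extension to coincide with $F$ throughout the polytope $\conv(0, F^{-1}(C))$ rather than just on its side boundary. Both steps rely essentially on the structural information produced in Theorem~\ref{theo-preim-extr-set} (that $\conv(0, F^{-1}(C))$ sits inside a clean $(n+1)$-dimensional linear subspace with polyhedral geometry) together with the bijectivity on cones supplied by Theorem~\ref{conv(0,C)}.
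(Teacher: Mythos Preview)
Your overall plan---reduce to Lemma~\ref{preim-of-sph} and verify $F(tv)=tF(v)$ face by face via induction on $\dim C$---matches the paper, and your verification that $F^{-1}(S_Y)=S_X$ is fine. The inductive step, however, contains a genuine gap.

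The decisive problem is the rigidity claim for $\phi := F\circ\widetilde F^{-1}$. Even granting a well-defined linear $\widetilde F$ on $\widehat E$, you obtain only that $\phi$ is a self-homeomorphism of the $(n{+}1)$-dimensional polytope $\conv(0,C)$ fixing the lateral part $\conv(0,\partial C)$ of its boundary. Brouwer's invariance of domain tells you that $\phi$ preserves boundary and interior separately, but nothing more: there are plenty of self-homeomorphisms of a solid simplex that fix all facets but one pointwise yet move that remaining facet and the interior. The remark that the fixed set ``affinely spans'' is irrelevant---spanning constrains \emph{linear} maps, not arbitrary continuous bijections. So the step ``forces $\phi=\mathrm{id}$'' does not go through, and with it the conclusion $F=\widetilde F$ on $\conv(0,F^{-1}(C))$ collapses.

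The paper closes this gap by proving \emph{directly} that $F$ is an isometry on $F^{-1}(C)$: for $x,y\in F^{-1}(C)$ one finds $u,v\in F^{-1}(\partial C)$ with $x,y\in[u,v]$ (this uses the polyhedral structure produced in Theorem~\ref{theo-preim-extr-set}), and the inductive hypothesis together with Lemma~\ref{conv-smooth-prel} give $\|F(u)-F(v)\|=\|u-v\|$; a triangle-inequality chain then squeezes $\|F(x)-F(y)\|=\|x-y\|$. Mankiewicz now yields affinity of $F$ on $F^{-1}(C)$, and the same argument on each slice $\alpha F^{-1}(C)$ (using $F(\alpha F^{-1}(C))=\alpha C$ from Lemma~\ref{normeq}, Theorem~\ref{conv(0,C)} and Theorem~\ref{theo-preim-extr-set}) plus the inductive hypothesis on $\partial C$ gives $F(\alpha x)=\alpha F(x)$. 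This isometry-on-the-face computation is precisely the analytic ingredient your topological argument tries---and fails---to replace.

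A secondary issue: your application of Mankiewicz to $F|_{A_i}$ with $A_i=\conv\bigl(F^{-1}(C_i)\cup(-F^{-1}(C_i))\bigr)$ is not justified, because Lemma~\ref{conv-smooth-prel} only gives isometry on the non-convex ``bowtie'' $\{su:u\in F^{-1}(C_i),\ s\in[-1,1]\}$, which is a proper subset of $A_i$ (take $u\neq u'$ in $F^{-1}(C_i)$ and note that $\tfrac12 u-\tfrac12 u'\in A_i$ is generally not a scalar multiple of any point of $F^{-1}(\partial C)$). This is repairable by working instead with the single cone $\conv(0,F^{-1}(C_i))$, but the repair does not rescue the main argument above.
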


\begin{proof}
Let us first show, that $F( S_X) = S_Y$.
Since
\begin{equation}\label{eqS_Y}
 S_Y=\bigcup_{i\in I}^{}C_i,
\end{equation}
 where $C_i$ are finite-dimensional polyhedral extreme subsets of $S_Y$ and $I$ is some index set, one may deduce
 $$B_Y=\bigcup_{i\in I}\conv(0,C_i).$$
 Due to bijectivity of $F$, theorem \ref{conv(0,C)} implies
 $$B_X=\bigcup_{i\in I}\conv(0,F^{-1}(C_i)).$$
 Consequently, there is no other norm-one points in $B_X$ except for points from $F^{-1}(C_i)$, and we get
$$
S_X=\bigcup_{i\in I}F^{-1}(C_i)=F^{-1}(S_Y).
$$
To prove that $F$ is an isometry we will use lemmas \ref{preim-of-sph} and \ref{conv-smooth-prel}. We are going to show for the set $V$ from lemma \ref{conv-smooth-prel} that
\begin{equation}\label{eq:cinV1}
F^{-1}(C)\subset V
\end{equation}
 for every $n$-dimensional polyhedral extreme subset $C$ of $S_Y$. To do that, we will use induction by dimension. For $0$-dimensional sets, i.e. extreme points, the statement we need follows from item (3) of theorem \ref{wing-sc}.
 Now suppose that the inclusion is proved for all $(n-1)$-dimensional polyhedral extreme subsets and let us prove it for dimension $n$. Consider some $n$-dimensional extreme subset $C$ in $S_Y$. For every pair $x,y\in F^{-1}(C)$ there are $u,v\in F^{-1}(\partial C)$ such that $x=\lambda u + (1-\lambda)v $ and $y=\mu u + (1-\mu)v, \lambda,\mu \in(0,1)$. Without loss of generality one may account $\lambda>\mu$. Since $\partial C$ consists of $(n-1)$-dimensional polyhedral extreme subsets, the inductive hypothesis and lemma \ref{conv-smooth-prel} give that $\|u-v\|=\|F(u)-F(v)\|$. Since $F$ is non-expansive,
 \begin{align*}
 \|u-v\|&=\|F(u)-F(v)\|\leq \|F(u)-F(x)\|+\|F(x)-F(y)\|\\
 &+\|F(y)-F(v)\|\leq \|u-x\|+\|x-y\|+\|y-v\|\\
 &= (1-\lambda)\|u-v\|+(\lambda-\mu)\|u-v\|+\mu\|u-v\|=\|u-v\|.
 \end{align*}
 So we get $\|F(u)-F(x)\|=\|u-x\|$, $\|F(y)-F(v)\|=\|y-v\|$, $\|F(x)-F(y)\|= \|x-y\|$. Thus, $F$ is bijective isometry between $F^{-1}(C)$ and $C$ and Proposition \ref{Mankiewicz} implies that $F$ is affine on $F^{-1}(C)$. Lemma \ref{normeq} together with Theorem \ref{conv(0,C)} give the equality $F(\alpha F^{-1}(C)) = \alpha C$ for $\alpha \in [0, 1]$, and application of the ``moreover" part of theorem \ref{theo-preim-extr-set} extends this to $\alpha\in [-1,1]$. The same way as before,
 the inductive hypothesis and lemma \ref{conv-smooth-prel} imply that $F$ is bijective isometry between $\alpha F^{-1}(C)$ and $\alpha C$, so $F$ is affine on $\alpha F^{-1}(C)$. We are going to show that $F(\alpha x)=\alpha F(x)$ for all $x \in F^{-1}(C)$, $\alpha \in [-1,1]$. Every $x \in F^{-1}(C)$ is of the form $x=\lambda u+(1-\lambda)v$, where $u,v\in F^{-1}(\partial C)$ and $\lambda\in (0,1)$. We obtain
 \[
 F(\alpha x) = F( \lambda \alpha u+(1-\lambda)\alpha v) = \lambda F(\alpha u)+(1-\lambda)F(\alpha v),
 \]
because $F$ is affine on $\alpha F^{-1}(C)$. By the inductive hypothesis $F(\alpha u) =\alpha F(u)$, $F(\alpha v) = \alpha F(v)$, so
 \[
 F(\alpha x) = \lambda \alpha F(u) + (1-\lambda) \alpha F(v) = \alpha (\lambda F(u) + (1-\lambda)F(v)).
 \]
It remains to use the fact that $F$ is affine on $F^{-1}(C)$ to conclude that
 \[
 F(\alpha x) = \alpha F(\lambda u + (1-\lambda)v) = \alpha F(x).
 \]
 So, the required inclusion \eqref{eq:cinV1} is demonstrated.
 At last, \eqref{eqS_Y} and the written above imply that for every $v \in F^{-1}(S_Y)$ and every $t \in [-1,1]$ $F(tv) = t F(v)$. So, the application of lemma \ref{preim-of-sph} completes the proof of the theorem.
 \end{proof}

\bibliographystyle{amsplain}

\end{document}